\documentclass[reqno,10pt]{amsart}
\usepackage{amssymb}
\usepackage{latexsym}
\usepackage{hyperref}
\usepackage{amsmath}
\usepackage{amscd}
\usepackage{color}
\usepackage{enumerate}
\usepackage{amsfonts}
\usepackage{graphicx}
\usepackage{mathrsfs}
\usepackage{setspace}
\newcommand{\eps}{{\varepsilon}}
\theoremstyle{plain}
\newtheorem{theorem}{Theorem}
\newtheorem{proposition}[theorem]{Proposition}
\newtheorem{lemma}[theorem]{Lemma}
\newtheorem{corollary}[theorem]{Corollary}

\theoremstyle{definition}

\newtheorem{remark}[theorem]{Remark}

\numberwithin{equation}{section}
\numberwithin{theorem}{section}
\let\Re=\undefined\DeclareMathOperator*{\Re}{Re}
\let\Im=\undefined\DeclareMathOperator*{\Im}{Im}

\def\ge{\geqslant}
\def\le{\leqslant}
\def\b{\boldsymbol}
\begin{document}
\title[Scattering for NLS system in $\mathbb{R}^5$]{Scattering For Mass-resonance Nonlinear Schr\"odinger System in 5D}
\author[F. Meng]{Fanfei Meng}
\address{Graduate School of China Academy of Engineering Physics,  \ Beijing, \ China, \ 100089, }
\email{mengfanfei17@gscaep.ac.cn}
\author[C. Xu]{Chengbin Xu}
\address{Graduate School of China Academy of Engineering Physics,  \ Beijing, \ China, \ 100089, }
\email{xuchengbin19@gscaep.ac.cn}
\subjclass[2010]{Primary 35Q55}
\date{\today}
\keywords{Schr\"odinger system, scatter, interaction Morawetz estimate.}
\maketitle

\begin{abstract}
In this paper, we simplify the proof of M. Hamano in \cite{Hamano2018}, scattering theory of the solution to \eqref{NLS system}, by using the method from B. Dodson and J. Murphy in \cite{Dodson2018}.
Firstly, we establish a criterion to ensure the solution scatters in $ H^1(\mathbb{R}^5) \times H^1(\mathbb{R}^5) $.
In order to verify the correctness of the condition in scattering criterion, we must exclude the concentration of mass near the origin.
The interaction Morawetz estimate and Galilean transform characterize a decay estimate, which implies that the mass of the system cannot be concentrated.
\end{abstract}

\maketitle
\section{Introduction}
We consider the quadratic nonlinear Schr\"odinger system:
\begin{equation}\tag{$\text{NLS system}$}
\left\{
\begin{aligned}
& i\partial_{t}\b{\rm u} + A \b{\rm u} + \b{\rm f} = \b{\rm 0}, \\
& \b{\rm u}(0,x)= \b{\rm u}_0(x),
\end{aligned}
\right.
\quad (t, x) \in \mathbb{R} \times \mathbb{R}^d.
\label{NLS system}
\end{equation}
where
\begin{equation*}
\b{\rm u} = \left(
\begin{aligned}
u \\
v
\end{aligned}
\right),~ A = \left(
\begin{aligned}
\Delta ~~~ & ~~~~ 0 \\
0 ~~~~ & ~~~~ \kappa\Delta
\end{aligned}
\right),~\b{\rm f} = \left(
\begin{aligned}
v\overline{u} \\
u^2
\end{aligned}
\right),~\b{\rm u_0} = \left(
\begin{aligned}
u_0 \\
v_0
\end{aligned}
\right),
\end{equation*}
$ u,v: \mathbb{R} \times \mathbb{R}^{d} \rightarrow \mathbb{C} $ are unknown functions and $ \Delta $ denotes the Laplacian in $ \mathbb{R}^{d} $.

From physical viewpoint, \eqref{NLS system} is related to the Raman amplification in a plasma.
This process is a nonlinear instability phenomenon (see \cite{Colin2009} for more detail).
Solutions to \eqref{NLS system} conserve the \emph{mass}, \emph{energy} and \emph{momentum}, defined respectively by
\begin{equation*}
\begin{aligned}
M(\b{\rm u}) & = \Vert u \Vert_{L^2}^2 + \Vert v \Vert_{L^2}^2 \equiv M(\b{\rm u}_{0}), \\
E(\b{\rm u}) & = H(\b{\rm u}) - R(\b{\rm u}) \equiv E(\b{\rm u}_{0}), \\
P(\b{\rm u}) & = \Im \int_{\mathbb{R}^d} \left( \overline{u}\nabla u + \frac{1}{2}\overline{v}\nabla v \right) {\rm d}x \equiv P(\b{\rm u}_0),
\end{aligned}
\end{equation*}
where
\[
\begin{aligned}
  & (\text{kinetic energy}) \quad & H(\b{\rm u}) & = \Vert \nabla u \Vert_{L^2}^2 + \frac{\kappa}{2} \Vert \nabla v \Vert_{L^2}^2, \\
  & (\text{potential energy}) \quad & R(\b{\rm u}) & = \Re \int_{\mathbb{R}^d} \overline{v}u^2 {\rm d}x.
\end{aligned}
\]

The equation \eqref{NLS system} is invariant under the scaling
\[
\b{\rm u}_{\lambda}(t, x) = \lambda^2 \b{\rm u} \left( \lambda^2 t, \lambda x \right)
\]
for $ \lambda > 0 $.
The critical regularity of Sobolev space is $ {\rm \dot{H}}^{s_c} $,
i.e. $ \Vert \b{\rm u}_{\lambda} \Vert_{{\rm \dot{H}}_{x}^{s_c}(\mathbb{R}^d)} =  \Vert \b{\rm u} \Vert_{{\rm \dot{H}}_{x}^{s_c}(\mathbb{R}^d)} $ where $ s_c = \frac{d}{2} - 2 $.
Therefore, the equation \eqref{NLS system} is called mass-subcritical if $ d \le 3 $, mass-critical if $ d=4 $, energy-subcritical if $ d=5 $, and energy-critical if $ d=6 $.
Besides, $ \kappa = \frac{1}{2} $ is called the mass-resonance condition, in which case \eqref{NLS system} has the Galilean invariance:
\[
\left(
\begin{aligned}
u(t, x) \\
v(t, x)
\end{aligned}
\right) \to \left(
\begin{aligned}
e^{ix\cdot\xi}e^{-t\vert\xi\vert^2}u(t, x-2t\xi) \\
e^{2ix\cdot\xi}e^{-2t\vert\xi\vert^2}v(t, x-2t\xi)
\end{aligned}
\right)
\]
for any $ \xi \in \mathbb{R}^d $, while \eqref{NLS system} does not have this invariance as long as $ \kappa \ne \frac{1}{2} $.

Unlike the general nonlinear Schr\"odinger equation:
\[\tag{$\text{NLS}$}
i\partial_{t}u + \Delta u + \lambda \vert u \vert^{p-1} u = 0, \quad (t, x) \in \mathbb{R} \times \mathbb{R}^d,
\label{NLS}
\]
\eqref{NLS system} only has the focusing case.
Both focusing case ($ \lambda > 0 $) and defocusing case ($ \lambda < 0 $) of \eqref{NLS} have been studied in a large amount of literature,
such as \cite{Bourgain1999, Colliander2008, Dodson2012, Dodson, Dodson20161, Dodson2017, Duyckaerts2007, Killip2010, Miao2013, Miao2014, Miao2017, Visan2007}
by B. Dodson, T. Duyckaerts, R. Killip, C. Miao, M. Visan and so on.
Unlike the system with symmetric interaction:
\[\tag{$\text{sNLS system}$}
\left\{
\begin{aligned}
i\partial_{t} u + \Delta u + ( \vert u \vert^2 + \vert v \vert^2)u = 0, \\
i\partial_{t} v + \Delta v + ( \vert u \vert^2 + \vert v \vert^2)v = 0,
\end{aligned}
\right.
\quad (t, x) \in \mathbb{R} \times \mathbb{R}^d,
\label{sNLS system}
\]
\eqref{NLS system} contains two different status unknown functions.
The \eqref{sNLS system} in 3d has been studied by S. Xia, C. Xu \cite{Xia2019} and G. Xu \cite{Xu2014}.

In this paper, we consider energy-subcritical case of \eqref{NLS system} under the mass-resonance condition, that is, $ (\kappa, d) = (\frac{1}{2}, 5) $.
By solution, we mean a function $ \b{\rm u} \in C_{t}(I, {\rm H}_{x}^1(\mathbb{R}^5)) $ on an interval $ I \ni 0 $ satisfying the Duhamel formula
\[
\b{\rm u}(t) = \mathcal{S}(t) \b{\rm u}_0 + i \int_{0}^{t} \mathcal{S}(t-s) \b{\rm f}(s) {\rm d}s
\]
for $ t \in I $,
where we denote the Schr\"odinger group $ \mathcal{S}(t)\b{\rm w} =: \left( e^{it\Delta}w_1, e^{\kappa it\Delta}w_2 \right) $ for any $ \b{\rm w} =: (w_1, w_2) $.
In order to study the local theory of the Cauchy problem for \eqref{NLS system}, we need the Strichartz estimate.
By using the standard contraction mapping theorem, we can show: $ \exists ~ \delta_0 > 0 $ such that, if
\begin{equation*}
\left\Vert \mathcal{S}(t) \b{\rm u}_0 \right\Vert_{{\rm L}_{t}^6(I, {\rm L}_{x}^3(\mathbb{R}^5))} < \delta_0,
\end{equation*}
then we have a unique solution $ \b{\rm u}(t) = (u(t), v(t)) $ in the interval $ I $.
For large data we have solution $ \b{\rm u}(t) $ with a maximal interval of existence $ I_{\max} = (T_-(\b{\rm u}), T_+(\b{\rm u})) $.
If $ I_{\max} = \mathbb{R} $, we call the solution is global.
A global solution $ \b{\rm u} $ ``scatters'', i.e.
there exists $ \b{\rm u}_+ \in {\rm H}^1(\mathbb{R}^5) $ such that
\begin{equation*}
\lim_{t \to \pm \infty} \left\Vert \b{\rm u}(t) - \mathcal{S}(t) \b{\rm u}_+ \right\Vert_{{\rm H}^1(\mathbb{R}^5)} = 0.
\end{equation*}
The \eqref{NLS system} admits a global but nonscattering solution
\[
(u(t, x), v(t, x)) = (e^{it}\phi(x), e^{2it}\varphi(x)),
\]
where $ (\phi, \varphi) \neq {\b 0} $ is a non-negative radial solution to the elliptic system
\[
\left\{
\begin{aligned}
& \phi - \Delta \phi = \phi \varphi, \\
& 2\varphi - \kappa\Delta \varphi = \phi^2,
\end{aligned}
\right.
~~~~~~ x \in \mathbb{R}^5.
\]
We call $ \b{\rm Q} = (\phi, \varphi) $ the ``{\bf ground state}'' which is the one of smallest energy.
In \cite{Hamano2018}, M. Hamano determined the global behavior of the solutions to the system with data below the ground state and proved a blowing-up result if the data had finite variance or was radial.

Our main result in this paper is follows:
\begin{theorem}\label{main}
For $ (\kappa, d) = (\frac{1}{2}, 5) $ in the \eqref{NLS system}.
If the initial data $ \b{\rm u}_0 \in {\rm H}^1(\mathbb{R}^5) $ satisfies
$ M( \b{\rm u}_0 ) E( \b{\rm u}_0 ) < M( \b{\rm Q} ) E( \b{\rm Q} ) $ and $ M(\b{\rm u}_0) H(\b{\rm u}_0) \le M(\b{\rm Q}) H(\b{\rm Q}) $,
then the solution of \eqref{NLS system} is global and scatters in $ {\rm H}^1 $.
\end{theorem}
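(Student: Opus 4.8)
The plan is to forgo the concentration–compactness scheme of \cite{Hamano2018} and instead follow the Dodson–Murphy route from \cite{Dodson2018}: a variational step pinning down the energy trapping, then a soft ``scattering criterion'', and finally an interaction Morawetz estimate (combined with a Galilean normalization) used only to verify the hypothesis of that criterion.

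\emph{Step 1 (variational analysis, global existence, coercivity).} First I would record the sharp Gagliardo–Nirenberg-type inequality for the system, $R(\b{\rm u})\le C_{\mathrm{GN}}\,M(\b{\rm u})^{1/4}H(\b{\rm u})^{5/4}$, whose optimizer is the ground state $\b{\rm Q}$; the Pohozaev identities for the elliptic system then give $C_{\mathrm{GN}}\big(M(\b{\rm Q})H(\b{\rm Q})\big)^{1/4}=\tfrac45$ and $E(\b{\rm Q})=\tfrac15H(\b{\rm Q})$. Setting $f(t)=M(\b{\rm u}(t))H(\b{\rm u}(t))$ (with $M$ conserved), the inequality yields $M(\b{\rm u})E(\b{\rm u})\ge f(t)-C_{\mathrm{GN}}f(t)^{5/4}=:g(f(t))$, where $g$ increases on $[0,f_*]$, with $f_*:=M(\b{\rm Q})H(\b{\rm Q})$ its unique maximizer and $g(f_*)=M(\b{\rm Q})E(\b{\rm Q})$. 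Combining the two hypotheses, conservation of mass and energy, and continuity of $t\mapsto f(t)$, a trapping argument gives $f(t)\le f_{**}<f_*$ uniformly in $t$ (if $f$ ever reached $f_*$ then $g(f_*)=g(f(t))\le M(\b{\rm u}_0)E(\b{\rm u}_0)<g(f_*)$). Hence $\b{\rm u}$ is global with $\sup_t\|\b{\rm u}(t)\|_{H^1}<\infty$, and one obtains the coercivity $E(\b{\rm u}(t))\sim H(\b{\rm u}(t))$ together with the uniform spectral gap $R(\b{\rm u}(t))\le C_{\mathrm{GN}}f_{**}^{1/4}H(\b{\rm u}(t))$ with $C_{\mathrm{GN}}f_{**}^{1/4}<\tfrac45$, which is what will later absorb the focusing nonlinearity.

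\emph{Step 2 (scattering criterion).} Next I would establish, in the spirit of \cite{Dodson2018}: for any global solution with $\sup_t\|\b{\rm u}(t)\|_{H^1}\le E$ there exist $\varepsilon_0=\varepsilon_0(E)>0$ and $R_0=R_0(E)>0$ such that if
\[
\liminf_{t\to+\infty}\int_{|x|\le R_0}\big(|u(t,x)|^2+|v(t,x)|^2\big)\,{\rm d}x<\varepsilon_0^2 ,
\]
then $\b{\rm u}$ scatters forward in time (and symmetrically as $t\to-\infty$). Since by the local theory it suffices to prove $\|\b{\rm u}\|_{L^6_tL^3_x([T,\infty))}<\infty$ for some $T$, the argument is the now-standard one: at a time $T$ where the local mass is small, expand $\b{\rm u}(t)$ by Duhamel from $T$ and split the historical Duhamel integral into a ``far-past'' piece, handled by the dispersive decay of $\mathcal{S}(t)$ and Strichartz together with the embedding $H^1(\mathbb{R}^5)\hookrightarrow L^3$ (so $\|\b{\rm f}(s)\|_{L^{3/2}}\lesssim\|\b{\rm u}(s)\|_{H^1}^2$) and the decay of the Duhamel tail, and a ``recent-past'' piece of fixed length, handled by Strichartz together with Hölder on the small ball and the kinetic control outside it; a continuity argument then closes the estimate. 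The only system-specific point is running the Strichartz estimates simultaneously for $e^{it\Delta}$ and $e^{it\Delta/2}$.

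\emph{Step 3 (verifying the criterion: interaction Morawetz and Galilean boost).} By the Galilean invariance of the system — available precisely because $\kappa=\tfrac12$ — I may boost $\b{\rm u}_0$ so that $P(\b{\rm u}_0)=0$: one checks $E(\b{\rm u}^\xi)=E(\b{\rm u})+2\xi\cdot P(\b{\rm u})+|\xi|^2M(\b{\rm u})$ while $R$ and $M$ are invariant, so the momentum-zero boost only decreases $ME$ and $MH$ (hence preserves the hypotheses), and a scattering solution is mapped to a scattering solution; it therefore suffices to treat the normalized solution, still called $\b{\rm u}$. For it I would run the interaction Morawetz functional
\[
I(t)=\int_{\mathbb{R}^5}\int_{\mathbb{R}^5}\nabla_x|x-y|\cdot\Im\!\left(\overline{u}\,\nabla u+\tfrac12\,\overline{v}\,\nabla v\right)\!(t,x)\ \big(|u|^2+\tfrac12|v|^2\big)(t,y)\,{\rm d}x\,{\rm d}y ,
\]
which is uniformly bounded, $|I(t)|\lesssim M(\b{\rm u})^{3/2}H(\b{\rm u}(t))^{1/2}\lesssim 1$. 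Differentiating in time and using the equations, the leading term is the genuinely positive density $\iint|x-y|^{-3}\big(|u|^2+\tfrac12|v|^2\big)(t,x)\big(|u|^2+\tfrac12|v|^2\big)(t,y)$ — this is where $d=5\ge 4$ matters, since $-\Delta^2|x|\sim|x|^{-3}$ has the favorable sign, unlike $d=3$ — while the contributions of the nonlinearity $\b{\rm f}=(v\overline{u},u^2)$ reduce to terms controlled, by the Step 1 spectral gap, so that $\frac{d}{dt}I(t)$ dominates the positive density. This yields the $T$-uniform bound
\[
\int_0^T\int_{\mathbb{R}^5}\int_{\mathbb{R}^5}\frac{\big(|u|^2+\tfrac12|v|^2\big)(t,x)\,\big(|u|^2+\tfrac12|v|^2\big)(t,y)}{|x-y|^3}\,{\rm d}x\,{\rm d}y\,{\rm d}t\lesssim 1 .
\]
If the hypothesis of Step 2 failed there would be $R_0,\varepsilon_0,T_0$ with $\int_{|x|\le R_0}(|u(t)|^2+|v(t)|^2)\,{\rm d}x\ge\varepsilon_0^2$ for all $t\ge T_0$; restricting the double integral above to $|x|,|y|\le R_0$, where $|x-y|\le 2R_0$, forces its left side to be $\gtrsim\varepsilon_0^4R_0^{-3}(T-T_0)\to\infty$, a contradiction. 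Hence the criterion of Step 2 applies and $\b{\rm u}$ scatters; the same argument as $t\to-\infty$ completes the proof.

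\emph{Main obstacle.} The delicate point is Step 3, and inside it the interaction Morawetz identity for the coupled system: one must carry out the Morawetz bookkeeping with two Schr\"odinger flows of different speeds, track the cross terms produced by $v\overline{u}$ and $u^2$, and verify that the mass-resonance relation $\kappa=\tfrac12$ — the same condition furnishing the Galilean boost — is exactly what makes those terms assemble into controllable multiples of $R(\b{\rm u})$; the subthreshold coercivity of Step 1 then closes the monotonicity. Getting the scattering criterion of Step 2 with the precise spaces adapted to the two flows, and checking that the Galilean boost does not disturb the hypotheses, are the remaining (routine but not empty) verifications.
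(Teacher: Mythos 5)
Your Steps 1 and 2 are broadly in the spirit of the paper, but the heart of your argument contains genuine gaps, one in the form of the criterion and one in the Morawetz step. The criterion you state in Step 2 --- smallness of the mass in a fixed ball centered at the origin along a sequence of times implies scattering --- is the radial-type criterion of Dodson--Murphy's radial paper; its proof relies on radial Sobolev decay away from the origin to control the solution outside the ball, and in the non-radial setting treated here smallness of the local mass near the origin gives no information whatsoever about the solution elsewhere (concentration may occur at a moving spatial center, which a single boost to $P(\b{\rm u}_0)=0$ does not rule out). The paper instead proves a different criterion (Proposition \ref{scat}): smallness of the $ {\rm L}^6_t{\rm L}^3_x $ norm of $ \b{\rm u} $ on a subinterval of length $ l=\eps^{-4/5} $ inside an arbitrary window of length $ T_0 $, which is the form that can actually be verified without symmetry, and whose verification itself requires the pigeonhole/interpolation argument of Section 5.

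More seriously, Step 3 does not close as written. For this focusing system the nonlinear contribution to the interaction Morawetz identity has the unfavorable sign and enters with the weight $ \Delta_x\vert x-y\vert = 4/\vert x-y\vert $, i.e. as $ \iint \vert x-y\vert^{-1}\,\Re\left(\overline{v}u^2\right)(x)\,N_{\kappa}(y)\,{\rm d}x\,{\rm d}y $; this cannot be absorbed by the positive mass--mass term with weight $ \vert x-y\vert^{-3} $ (the weights scale differently), nor by the global spectral gap $ 4H-5R\ge\delta' H $, which compares unweighted whole-space quantities. Moreover, the Hessian of $ \vert x-y\vert $ is purely angular, so after the Cauchy--Schwarz step that controls the momentum-density cross term there is no leftover full-gradient kinetic positivity to which any coercivity could be applied; and a single global Galilean boost makes only the total momentum vanish, not the pointwise or localized momentum densities, so it does not help with that cross term. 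This is exactly the obstruction that forces the machinery of the paper (following the non-radial Dodson--Murphy argument): truncated weights $ \Gamma^{2}\left(\frac{x-s}{R}\right) $, averaging over the center $ s $ and logarithmically over scales $ R\in[R_0,R_0e^{J}] $, a ball-dependent boost $ \xi(t,s,R) $ as in \eqref{xi} so that the Galilean-invariant combination $ \mathcal{C+E} $ becomes the full localized kinetic term $ L^{\xi}_{\kappa} $ in \eqref{C+E}, and then coercivity on balls (Lemma \ref{coerb}) to dominate the localized wrong-sign term $ \mathcal{G} $. Your phrase ``the contributions of the nonlinearity reduce to terms controlled by the Step 1 spectral gap'' is precisely the step that fails, and the clean bound $ \int_0^T\iint\vert x-y\vert^{-3}(\cdots)\,{\rm d}x\,{\rm d}y\,{\rm d}t\lesssim 1 $ you claim is not obtainable by this route for the focusing problem; accordingly the paper only obtains the weaker, localized and averaged estimate \eqref{Me}, which is then fed into Proposition \ref{scat}.
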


Theorem \ref{main} was originally proven by M. Hamano in \cite{Hamano2018} for $ \kappa = \frac{1}{2} $,
through the use of concentration compactness by Kenig-Merle in \cite{Kenig2006}.
Later, using concentration compactness again, M. Hamano, T. Inui and K. Nishimura in \cite{Hamano2019} studied the scattering for $ \kappa > 0 $ and $ \b{\rm u}_0 $ radial.
We present a simplified proof here for the non-radial case under the mass-resonance condition.

\begin{remark}
In fact, the results of Theorem \ref{main} essentially holds for any $ \kappa > 0 $ if the initial data $ \b{\rm u}_0 $ in radial.
For $ \kappa = \frac{1}{2} $, we are able to use the Galilean invariance of \eqref{NLS system} to build interaction Morawetz estimate successfully without the radial assumption for $ \b{\rm u}_0 $.
But for $ \kappa \ne \frac{1}{2} $, we have to add the radial assumption owing to the lack of Galilean invariance when we study the scattering for \eqref{NLS system}.
Under this circumstance, the radial assumption for $ \b{\rm u}_0 $ implies the mass of the \eqref{NLS system}, if concentracted, must be at the origin.
Thus, we only need to establish a simpler Morawetz estimate instead of Proposition \ref{decay} to verify the condition of scattering criterion.
\end{remark}

Our proof of Theorem \ref{main} consists of two steps:

Firstly, we establish a scattering criterion as follows, using the method from B. Dodson, J. Murphy \cite{Dodson2016} and T. Tao \cite{Tao2004}.
\begin{proposition}[Scattering criterion]\label{scat}
Let $ \b{\rm u}_0, \b{\rm Q} $ be as in Theorem \ref{main} and suppose further that $ \Vert \b{\rm u}_0 \Vert_{{\rm H}^1(\mathbb{R}^5)} \lesssim E_0 $.
Let $ \b{\rm u} : \mathbb{R} \times \mathbb{R}^5 \to \mathbb{C} $ be the corresponding global solution to \eqref{NLS system}.
Suppose that $ \exists ~ t_0 \in I \subset \mathbb{R} $, an arbitrary interval of length $ T_0 $, such that
\begin{equation*}
\Vert \b{\rm u} \Vert_{{\rm L}_{t}^6([t_0-l, t_0], {\rm L}_{x}^3(\mathbb{R}^5))} \le \eps^{\frac{1}{30}},
\end{equation*}
where $ \eps = \eps(E_0) $ is sufficiently small, $ T_0 = T_0(\eps, E_0) $ is large enough and $ l = \eps^{-\frac{4}{5}}$.
Then $ \b{\rm u} $ scatters forward in time.
\end{proposition}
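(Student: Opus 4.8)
The plan is to follow the Duhamel-based scattering criterion of Dodson--Murphy and Tao. The goal is to show that the small Strichartz norm on the short window $[t_0-l,t_0]$ forces the global solution to have finite scattering norm $\Vert \b{\rm u}\Vert_{L^6_t([t_0,\infty),L^3_x)} < \infty$; standard Strichartz theory then upgrades this to scattering in $\mathrm H^1$. We write the solution for $t > t_0$ using the Duhamel formula based at $t_0$, and split the nonlinear Duhamel term into the contribution from the short interval $[t_0-l,t_0]$ and the contribution from $(-\infty,t_0-l]$. Writing $\b{\rm F}(t) = \mathcal S(t-t_0)\b{\rm u}(t_0) = \mathcal S(-l)\big[\mathcal S(t-t_0+l)\b{\rm u}(t_0)\big]$ and using the equation on $[t_0-l,t_0]$, one obtains
\begin{equation*}
\b{\rm u}(t) = \mathcal S(t - (t_0-l))\b{\rm u}(t_0-l) + i\int_{t_0-l}^{t}\mathcal S(t-s)\b{\rm f}(s)\,\mathrm ds,
\end{equation*}
and the objective is to estimate the free evolution piece $\mathcal S(t-(t_0-l))\b{\rm u}(t_0-l)$ in $L^6_tL^3_x$ on $[t_0,\infty)$, since the nonlinear tail on the short window is controlled directly by the hypothesis and Strichartz.

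The key steps, in order, are: (i) use the bound $\Vert\b{\rm u}\Vert_{L^6_tL^3_x([t_0-l,t_0])}\le\eps^{1/30}$ together with the fractional-derivative Strichartz estimates (the $\mathrm H^1$-subcritical nonlinear estimate for the quadratic nonlinearity $\b{\rm f}$, which closes because $d=5$ is energy-subcritical) to show that the Duhamel integral over $[t_0-l,t_0]$ has small scattering-size norm; (ii) for the free term $\mathcal S(\cdot)\b{\rm u}(t_0-l)$, again invoke Duhamel, this time based at $-\infty$ (or at a large negative time), so that $\b{\rm u}(t_0-l)$ is expressed as a free evolution of the (a priori bounded) data plus a nonlinear integral; (iii) apply dispersive/Strichartz decay, exploiting that the relevant time separation is at least $l = \eps^{-4/5}$, to gain a small power of $\eps$ from the decay factor $t^{-d/2}$ or from a Strichartz tail estimate over a window of length $\gtrsim T_0$; (iv) combine with a continuity/bootstrap argument on $\Vert\b{\rm u}\Vert_{L^6_tL^3_x([t_0,T])}$ as $T\to\infty$, choosing $\eps$ small and $T_0$ large depending on $E_0$, to conclude the scattering norm is finite. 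The numerology ($\eps^{1/30}$, $l=\eps^{-4/5}$) is engineered precisely so that these competing powers of $\eps$ balance.

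The main obstacle I expect is step (iii): extracting a quantitative smallness from the time gap $l$ for the piece of the solution that is \emph{not} controlled by the hypothesis — namely, showing that the nonlinear interactions coming from $(-\infty,t_0-l]$ contribute only $o(1)$ to the $L^6_tL^3_x$ norm on $[t_0,\infty)$. This is exactly where the a priori global spacetime information is unavailable, so one cannot simply use a Strichartz tail; instead one must use the decay estimate furnished by Proposition \ref{decay} (the interaction-Morawetz plus Galilean-transform decay), which says the mass does not concentrate, to bound these long-range interactions. Quantifying how the $\eps^{-4/5}$-length gap interacts with the dispersive decay $|t-s|^{-5/2}$ and with the $L^6L^3$ exponents, so that a genuine positive power of $\eps$ survives, is the delicate computation; everything else is routine Strichartz bookkeeping. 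A secondary technical point is handling the derivative in the $\mathrm H^1$ norm: the nonlinearity $\b{\rm f}=(v\bar u, u^2)$ is smooth, so the fractional chain rule / product rule applies cleanly, but one must track that the intermediate Strichartz exponents stay admissible in dimension five.
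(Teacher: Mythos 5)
Your overall skeleton (Duhamel split at $t_0-l$, control of the recent piece $[t_0-l,t_0]$ by the hypothesis plus Strichartz, smallness of the far-past piece from the time gap $l=\eps^{-4/5}$) is the right one and matches the paper's Step 2; your treatment of the $[t_0-l,t_0]$ piece corresponds to \eqref{nonlinear2}. But there is a genuine gap precisely at the point you yourself flag as ``the delicate computation'': the far-past estimate is never carried out, and the mechanism you propose to fill it --- invoking the interaction Morawetz/decay estimate of Proposition \ref{decay} --- is neither needed nor how the argument goes. The scattering criterion is proved independently of Proposition \ref{decay}; that proposition enters only in Section 5, to verify the hypothesis \eqref{b1} (and it carries the extra normalization \eqref{E0}, which the criterion does not assume). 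The correct mechanism is elementary: rewrite the far-past contribution by the backward Duhamel identity $i\int_0^{t_0-l}\mathcal{S}(t-s)\b{\rm f}(s)\,{\rm d}s=\mathcal{S}(t-t_0+l)\b{\rm u}(t_0-l)-\mathcal{S}(t)\b{\rm u}_0$, then interpolate $\Vert I\Vert_{{\rm L}^6_t{\rm L}^3_x}\le\Vert I\Vert^{3/4}_{{\rm L}^{18}_t{\rm L}^{18/5}_x}\Vert I\Vert^{1/4}_{{\rm L}^2_t{\rm L}^{10/3}_x}$. The ${\rm L}^2_t{\rm L}^{10/3}_x$ factor is bounded via Strichartz applied to the two free evolutions, using only the conserved mass; the ${\rm L}^{18}_t{\rm L}^{18/5}_x$ factor is estimated on the integral form via the dispersive bound $\Vert\mathcal{S}(t-s)\b{\rm f}(s)\Vert_{{\rm L}^{18/5}_x}\lesssim|t-s|^{-10/9}\Vert\b{\rm f}(s)\Vert_{{\rm L}^{18/13}_x}$, H\"older, and the uniform ${\rm H}^1$ bound from Lemma \ref{coer1}, so that integrating in $s\le t_0-l$ gives $|t-t_0+l|^{-1/9}$ and then $\Vert\,\cdot\,\Vert_{{\rm L}^{18}_t(t_0,\infty)}^{3/4}\sim l^{-1/24}=\eps^{1/30}$, as in \eqref{nonlinear1}. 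In particular no spacetime information on $(-\infty,t_0-l]$ and no non-concentration of mass is required; only fixed-time ${\rm H}^1$ bounds, which the sub-ground-state hypothesis already supplies.

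A second gap is in your step (iv): a bootstrap of $\Vert\b{\rm u}\Vert_{{\rm L}^6_t{\rm L}^3_x([t_0,T])}$ up to $T=\infty$ cannot close as described, because the free term $\mathcal{S}(t-t_0)\b{\rm u}(t_0)$ is only finite, not small, in ${\rm L}^6_t{\rm L}^3_x(t_0,\infty)$ (the pair $(6,3)$ is $\dot{\rm H}^{1/2}$-admissible, so Strichartz gives a bound by $E_0$, not by $\eps$). The paper's Step 1 supplies the missing structure: split $\mathbb{R}$ into $J(\eps,E_0)$ intervals $I_j$ on which the linear flow of the fixed data is small as in \eqref{linear}; dispose of the intervals of length $\le 2T_0$ by $\Vert\b{\rm u}\Vert^6_{{\rm L}^6_t(I_j,{\rm L}^3_x)}\lesssim_{E_0}|I_j|$; and on each long interval choose $t_0$ inside its first window of length $T_0$ (this is where the hypothesis, which must be available in every window of length $T_0$, is used), write $\mathcal{S}(t-t_0)\b{\rm u}(t_0)=\mathcal{S}(t)\b{\rm u}_0+i\int_0^{t_0}\mathcal{S}(t-s)\b{\rm f}(s)\,{\rm d}s$, and run the continuity argument on $(t_0,b_j)$ with the two sources of smallness just described. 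Summing over $j$ gives $\Vert\b{\rm u}\Vert^6_{{\rm L}^6_t(\mathbb{R},{\rm L}^3_x)}\lesssim_{E_0}T_0<\infty$, and scattering follows by the standard argument. Without this interval decomposition relative to $\mathcal{S}(t)\b{\rm u}_0$, your proposal does not yield a finite global scattering norm.
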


Secondly, in order to verify the condition of the above criterion,
we prove a certain decay estimate, which can be deduced from an interaction Morawetz estimate.
The proof of the following interaction Morawetz estimate relies on a Galilean invariance of \eqref{NLS system}.
\begin{proposition}[Interaction Morawetz estimate]\label{decay}
Let $ \b{\rm u}_0, \b{\rm Q}, I $ be as in Theorem \ref{scat}, and suppose further that
\begin{equation}
M(\b{\rm u}_0) = E(\b{\rm u}_0) = E_0.
\label{E0}
\end{equation}
Let $ \b{\rm u} : \mathbb{R} \times \mathbb{R}^5 \to \mathbb{C} $ be the corresponding global solution to \eqref{NLS system}.
Then there exists $ \delta > 0 $ such that for $ R_0 = R_0(\delta, M(\b{\rm u}), \b{\rm Q}) $ sufficiently large,
\begin{equation}
\frac{\delta}{JT_0} \int_{I} \int_{R_0}^{R_0e^{J}}\frac{1}{R^5} \int_{\mathbb{R}^5} \int_{\mathbb{R}^5} \int_{\mathbb{R}^5} L^{\xi}_{\kappa} \Gamma^{2} \left( \frac{x - s}{R} \right) \Gamma^{2} \left( \frac{y - s}{R} \right) N_{\kappa} {\rm d}x {\rm d}y {\rm d}s \frac{{\rm d}R}{R} {\rm d}t \lesssim \nu E_0^2,
\label{Me}
\end{equation}
for $ \nu = \frac{R_0e^{J}}{JT_0} + \eps $, where $ 0 < \eps < 1, J \ge \eps^{-1} $ are both constant and $ L^{\xi}_{\kappa} = \left( 2 \vert \nabla u^{\xi}(x) \vert^2 + \kappa \vert \nabla v^{\xi}(x) \vert^2 \right), N_{\kappa} = \left( 2\kappa \vert u(y) \vert^2 + \vert v(y) \vert^2 \right) $,
$ \Gamma \in C^{\infty} $ is a radial decreasing function satisfying
\begin{equation}
\Gamma(x) = \left\{
\begin{aligned}
1 & \quad \quad \vert x \vert \le 1 - \eps, \\
0 & \quad \quad \vert x \vert > 1,
\end{aligned}
\right.
\label{Gamma}
\end{equation}
and $ \b{\rm u}^{\xi} = (e^{\kappa ix\cdot\xi}u, e^{ix\cdot\xi}v) $ with
\begin{equation}
\xi = - \frac{\int_{\mathbb{R}^5} \Im \left( 2\kappa u(x) \overline{\nabla u(x)} + \kappa v(x) \overline{\nabla v(x)} \right) \Gamma^{2} \left(\frac{x-s}{R}\right) {\rm d}x}{\int_{\mathbb{R}^5} \left(2\kappa \vert u(x) \vert^2 + \vert v(x) \vert^2 \right) \Gamma^{2} \left(\frac{x-s}{R}\right){\rm d}x}
\label{xi}
\end{equation}
(unless the denominator is zero, in which case $ \xi = \xi(t, s, R) = 0 $).
\end{proposition}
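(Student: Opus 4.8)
The plan is to deduce \eqref{Me} from a spatially localized interaction Morawetz inequality --- a bilinear virial estimate of Planchon--Vega type, combined with a Galilean boost in the style of Dodson--Murphy \cite{Dodson2016, Dodson2018} --- derived for each fixed center $s$ and scale $R$, and then averaged over $s\in\mathbb{R}^5$, over $R\in[R_0,R_0e^{J}]$ against $\tfrac{{\rm d}R}{R}$, and over $t\in I$. Two features of the mass-resonance case are essential, and I record them first: when $\kappa=\tfrac12$ the total mass density $N_\kappa=|u|^2+|v|^2$ satisfies a genuine continuity equation
\[
\partial_t N_\kappa+\nabla\cdot\bigl(2\,\Im(\overline{u}\nabla u)+\Im(\overline{v}\nabla v)\bigr)=0,
\]
since the nonlinear mass source $(2-4\kappa)\,\Im(\overline{u}^{\,2}v)$ vanishes identically; and the potential energy is Galilean invariant, $R(\b{\rm u}^{\xi})=R(\b{\rm u})$, so that the boost preserves the ground-state threshold: $M(\b{\rm u}^{\xi})=M(\b{\rm u})$ and $M(\b{\rm u}^{\xi})H(\b{\rm u}^{\xi})\le M(\b{\rm u})H(\b{\rm u})\le M(\b{\rm Q})H(\b{\rm Q})$.

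Fix $s$ and $R\in[R_0,R_0e^{J}]$, write $\chi_R(x)=\Gamma^2\bigl(\tfrac{x-s}{R}\bigr)$, and let $\xi=\xi(t,s,R)$ be as in \eqref{xi}, the choice that centers the $\chi_R$-weighted momentum of the boosted solution $\b{\rm u}^{\xi}$ in frequency. Introduce a localized interaction Morawetz functional of the form
\[
\mathcal{M}_{s,R}(t)=\int_{\mathbb{R}^5}\!\int_{\mathbb{R}^5} N_\kappa(t,y)\,\chi_R(y)\;\nabla a_R(x)\cdot\b{\rm P}^{\xi}(t,x)\,{\rm d}x\,{\rm d}y,
\]
where $\b{\rm P}^{\xi}$ is the momentum density of $\b{\rm u}^{\xi}$ and $a_R$ is a truncated, radial, convex Morawetz weight built from $\Gamma(\tfrac{\cdot-s}{R})$, so $\nabla^2 a_R\ge0$, $\nabla^2 a_R\gtrsim\tfrac1R\,{\rm Id}$ on $B(s,(1-\eps)R)$ and $|\nabla a_R|\lesssim1$; then $|\mathcal{M}_{s,R}(t)|\lesssim_{E_0}1$ uniformly in $s,R,t$. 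Differentiating in $t$ --- using the continuity equation for $N_\kappa(y)\chi_R(y)$, the stress-energy identity for $\partial_t\b{\rm P}^{\xi}(x)$, and integration by parts in $x$ to move derivatives off $a_R$ --- produces four kinds of terms: (i) the leading term, which is $\gtrsim\tfrac1R\int\!\!\int N_\kappa(t,y)\chi_R(y)\bigl(2|\nabla u^{\xi}(x)|^2+\kappa|\nabla v^{\xi}(x)|^2\bigr)\chi_R(x)\,{\rm d}x\,{\rm d}y$ up to an annular error, i.e.\ a multiple of the integrand of \eqref{Me}; (ii) a term controlled by the localized potential energy of $\b{\rm u}$; (iii) cutoff errors supported on $\{(1-\eps)R<|x-s|<R\}$ and carrying a factor $R^{-1}$ or $R^{-2}$; and (iv) a term proportional to $\dot\xi$. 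The sub-threshold hypotheses together with the sharp Gagliardo--Nirenberg inequality attached to $\b{\rm Q}$ (as in \cite{Hamano2018}), applied to $\b{\rm u}^{\xi}$ and localized to $B(s,R)$ up to an $O(R^{-c})$ loss, show that (ii) is at most a small multiple of (i) and may be absorbed into it; hence $\tfrac{d}{dt}\mathcal{M}_{s,R}(t)$ dominates a positive multiple of the integrand of \eqref{Me}, modulo (iii) and (iv).

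Integrating in $t\in I$, multiplying by $\tfrac1{R^5}$, integrating ${\rm d}s$ and $\tfrac{{\rm d}R}{R}$ over $[R_0,R_0e^{J}]$, and multiplying by $\tfrac{\delta}{JT_0}$ turns the left side into a multiple of the left side of \eqref{Me}. On the right, the boundary term $\mathcal{M}_{s,R}(\sup I)-\mathcal{M}_{s,R}(\inf I)$, after $\tfrac1{R^5}\!\int{\rm d}s$, the integral $\int\tfrac{{\rm d}R}{R}=J$, and the factor $\tfrac{\delta}{JT_0}$, contributes $\lesssim\delta\,\tfrac{R_0e^{J}}{JT_0}E_0^2$, the first half of $\nu E_0^2$; the cutoff errors (iii) gain smallness from the averaging --- a given $x$ lies on the annulus only for $R$ in a logarithmic interval of length $\sim\eps$, and then the $s$-integral forces $R\gtrsim|x-y|$ with $R\le R_0e^{J}$ --- so they are $\lesssim\delta\,\eps R_0^{-1}E_0^2\lesssim\eps E_0^2$ once $R_0=R_0(\delta,M(\b{\rm u}),\b{\rm Q})$ is large; and (iv), estimated as below, is $\lesssim\eps E_0^2$. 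Together these give \eqref{Me}.

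The main obstacle is the moving-frame term (iv). Because $\xi(t,s,R)$ --- the $\chi_R$-weighted momentum divided by the $\chi_R$-weighted mass, switched to $0$ once the latter drops below a fixed threshold --- varies in $t$, differentiating $\b{\rm u}^{\xi}$ produces a contribution proportional to $\dot\xi$; and differentiating the numerator of \eqref{xi} by the stress-energy identity yields, besides harmless $R^{-1}$-terms, a nonlinear pressure term that a naive bound cannot afford. One must instead pair this pressure contribution against the matching nonlinear contribution of $\partial_t\b{\rm P}^{\xi}$ at frozen $\xi$, using that $a_R$ is radial about $s$ up to the annular errors; after this cancellation what remains is again $O(R^{-1})$ times quantities bounded on $\{\xi\neq0\}$, so (iv) joins the cutoff errors after the $s$-, $R$- and $t$-averages. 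Carrying out this cancellation cleanly, handling the non-smooth threshold, and keeping every implicit constant independent of $(s,R)$ is the delicate part; the two mass-resonance identities above --- $R(\b{\rm u}^{\xi})=R(\b{\rm u})$ and the exact vanishing of the nonlinear mass source --- are precisely what make the argument work, and are exactly where $\kappa=\tfrac12$ is used.
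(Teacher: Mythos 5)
Your overall scheme --- an interaction Morawetz quantity localized at centers $s$ and scales $R$, a boost $\xi(t,s,R)$ chosen as in \eqref{xi} to kill the localized momentum, coercivity below the ground state to handle the potential term, and averaging in $s$, logarithmically in $R$, and in $t$ --- is the same as the paper's, and your identification of where $\kappa=\tfrac12$ enters (the vanishing of the nonlinear source in the continuity equation for $N_{\kappa}$, and $R(\b{\rm u}^{\xi})=R(\b{\rm u})$, which feeds the coercivity) is accurate. But there is a genuine gap exactly at the point you yourself flag as the main obstacle. You place the time-dependent boost \emph{inside} the functional: $\mathcal{M}_{s,R}(t)$ is built from the momentum density $\b{\rm P}^{\xi}(t,x)$ of $\b{\rm u}^{\xi}$ with $\xi=\xi(t,s,R)$. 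Differentiating in $t$ then produces your term (iv), proportional to $\dot\xi$, and your treatment of it is only a sketch: the claimed cancellation between the pressure part of $\dot\xi$ and ``the matching nonlinear contribution of $\partial_t\b{\rm P}^{\xi}$ at frozen $\xi$'' is asserted, not proved, and nothing in the sub-threshold hypotheses obviously produces it. In addition, your thresholding (setting $\xi=0$ when the localized mass is small) makes $\xi$ discontinuous in $t$, so $\mathcal{M}_{s,R}$ is not absolutely continuous and the fundamental-theorem-of-calculus bookkeeping over $I$ acquires jump terms you do not control; and just above the threshold the denominator of \eqref{xi} is small, so $\dot\xi$ need not be $O(R^{-1})$ uniformly in $(s,R)$. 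As written, the estimate of (iv) --- which you correctly call the delicate part --- is missing, and it is the heart of the matter.

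The paper avoids this difficulty entirely, and that is the structural difference you should adopt. There the functional \eqref{M} is defined with the \emph{unboosted} solution and a single weight $a(x-y)$ whose building blocks $\phi,\psi,\phi_{1}$ already carry the average over the centers $s$; one computes $\tfrac{\rm d}{{\rm d}t}M(t)$ first, observes that for each fixed $t,s,R$ the combination $\mathcal{C}+\mathcal{E}$ is algebraically invariant under $(u,v)\mapsto(e^{\kappa ix\cdot\xi}u,e^{ix\cdot\xi}v)$ for any constant vector $\xi$, and only then substitutes the $\xi$ of \eqref{xi}. Since $\xi$ is never differentiated in time, no $\dot\xi$-term exists; the remaining terms are handled by $\mathcal{D}+\mathcal{F}\ge0$ via the Cauchy--Schwarz step \eqref{CS} and by the coercivity on balls, and the errors are absorbed by the $s$-, $R$- and $t$-averages as you describe. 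One smaller point: you justify step (ii) by saying the localized potential term is ``at most a small multiple of'' the kinetic term (i); that is not true in general. What the sub-threshold assumption gives is the coercivity $4H(\b{\rm u}^{\xi}_R)-5R(\b{\rm u}_R)\ge\delta' H(\b{\rm u}^{\xi}_R)$ of Lemmas \ref{coer2} and \ref{coerb}, i.e.\ positivity of the particular combination of kinetic and potential terms that actually appears, with a $\delta'$-fraction of the kinetic part left over; the potential term by itself is comparable to, not smaller than, the kinetic term, and the choice of $\xi$ in \eqref{xi} (which minimizes the localized kinetic energy of the boosted solution) is needed for that lemma to apply.
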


Combining the two steps, that are respectively formulated as above, we can obtain Theorem \ref{main}.
\begin{remark}
{\bf Where is the mass-resonance condition specifically used? }
After computing $ \frac{\rm d}{{\rm d}t} M(t) $ carefully in the proof of Proposition \ref{decay},
we find that $ \mathcal{C + E} $ in \eqref{C+E} always stays the same under Galilean transformation $ \b{\rm u}^{\xi} = (e^{\kappa ix\cdot\xi}u, e^{ix\cdot\xi}v) $ for any $ \kappa > 0 $.
The condition $ \kappa = \frac{1}{2} $ is used to match the linear terms and nonlinear term in \eqref{NLS system}.
Therefore, $ \kappa $ reflects the coupling effect of this system.
Only when $ \kappa = \frac{1}{2} $ can we deduce the coercivity (Lemma \ref{coer2}, Lemma \ref{coerb}),
which is necessary to bound the major term of the interaction Morawetz estimate.
\end{remark}

\subsection{Outline of the paper}
The organization of this paper is as follows.
In Section 2, we clarify some preliminaries including notations and basic results.
In addition, we give some properties of ground state, based on which we establish a series of coercivity results.
We prove the scattering criterion and interaction Morawetz estimate in Section 3 and 4, respectively.
Finally, in Section 5, we use the results of Proposition \ref{decay} and \ref{scat} to complete the proof of main theorem.

\section{Preliminaries}
We mark $ A \lesssim B $ to mean there exists a constant $ C > 0 $ such that $ A \leqslant C B $.
We indicate dependence on parameters via subscripts, e.g. $ A \lesssim_{x} B $ indicates $ A \leqslant CB $ for some $ C = C(x) > 0 $.
For $ 1 \le p \le \infty $, we use $ p^\prime $ to denote the H\"older conjugate index of $ p $ with $ \frac{1}{p} + \frac{1}{p^\prime} = 1 $.
We write $ L_{t}^{q}L_{x}^{r} $ to denote the Banach space with norm
\begin{equation*}
\Vert u \Vert_{L_{t}^{q}(\mathbb{R}, L_{x}^{r}(\mathbb{R}^5))} := \left( \int_{\mathbb{R}} \left( \int_{\mathbb{R}^5} \vert u(t, x) \vert^{r} \mathrm{d}x \right) ^{\tfrac{q}{r}} \mathrm{d}t \right)^{\tfrac{1}{q}},
\end{equation*}
with the usual modifications when $ q $ or $ r $ are equal to infinity,
or when the domain $ \mathbb{R} \times \mathbb{R}^5 $ is replaced by space-time slab such as $ I \times \mathbb{R}^5 $.
We use $ (q, r) \in \Lambda_{s} $ to denote $ q \geqslant 2 $ and the pair satisfying
\begin{equation}
\frac{2}{q} = 5 \left( \frac{1}{2} - \frac{1}{r} \right) - s.
\label{Lambda}
\end{equation}
Lastly, to fit this artical, we use the notation $ {\rm L}^{q} $ to denote $ L^{q} \times L^{q} $ and $ {\rm H}^s $ to denote $ H^s \times H^s $.

\subsection{Variational characterization}
In this section, we are in the position to give the variational characterization for the sharp Gargliardo-Nirenberg inequality.
Firstly, We  will show the existence of the Ground state.
As a corollary, we obtain  the sharp Gargliardo-Nirenberg inequality.
Then we use the properties of the ground state to establish the Coercivity condition which will be used in the proof of Morawetz Estimate \eqref{Me}.

\begin{proposition}
[The existence of ground state, \cite{Hamano2018}]\label{ground state}
The minimal $J_{\min}$ of the nonnegative funtional
\[
    J(\b{\rm u}) : = (M(\b{\rm u}))^{\frac{1}{2}}(H(\b{\rm u}))^{\frac{5}{2}}(R(\b{\rm u}))^{-2},  \qquad  \b{\rm u} \in {\rm H}^{1}(\mathbb R^{5}) \setminus \{{\b 0}\}
\]
are attained at $ {\b{\rm u}} = (u, v) $,
whose expression has to be in the form of $ (u, v) = (e^{i\theta_1} m \phi(nx), e^{i\theta_2} m \varphi(nx)) $,
where $ m, n > 0 $, $ \theta_1, \theta_2 \in \mathbb R $,
and $ \b{\rm Q} = (\phi, \varphi) \neq {\b 0} $ is the non-negative radial solution of the equation
\[
\left\{
\begin{aligned}
& \phi - \Delta \phi = \phi \varphi, \\
& 2\varphi - \kappa\Delta \varphi = \phi^2,
\end{aligned}
\right.
~~~~~~ x \in \mathbb{R}^5.
\]
$ {\b{\rm Q}} $ is called a {\bf ground state} with $ J({\b{\rm Q}}) = J_{\min} $. The sets of all ground states is denotes as $ \mathcal{G} $.
All ground states share the same mass, denoted as $ M_{gs} $.
\end{proposition}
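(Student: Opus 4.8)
The plan is to treat this as a constrained minimization problem and run the classical rearrangement-and-compactness argument. First I would record the invariances of $J$: under the scalar scaling $\b{\rm u}\mapsto\lambda\b{\rm u}$ one has $(M,H,R)\mapsto(\lambda^2M,\lambda^2H,\lambda^3R)$, and under the dilation $\b{\rm u}(x)\mapsto\b{\rm u}(nx)$ one has $(M,H,R)\mapsto(n^{-5}M,n^{-3}H,n^{-5}R)$, and in both cases the exponents $\tfrac12,\tfrac52,-2$ are arranged so that $J$ is unchanged. Since $J(\b{\rm u})=+\infty$ unless $R(\b{\rm u})>0$, and since a phase rotation $v\mapsto e^{i\theta}v$ fixes $M$ and $H$ while it can be chosen so that $\int\overline v\,u^2$ becomes real and positive, it suffices to minimize over $\b{\rm u}$ with $R(\b{\rm u})=\bigl|\int\overline v\,u^2\bigr|>0$. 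Then I would reduce to nonnegative, radially symmetric, decreasing profiles: replacing $(u,v)$ by $(|u|,|v|)$ fixes $M$, does not increase $H$ (diamagnetic inequality), and does not decrease $R$ since $R\le\int|v|\,|u|^2$; replacing $(|u|,|v|)$ by the pair of Schwarz symmetrizations $(|u|^\ast,|v|^\ast)$ again fixes $M$, does not increase $H$ (P\'olya--Szeg\H{o}), and does not decrease $\int|v|\,|u|^2$ (Riesz rearrangement inequality applied to the triple product). Hence $J$ does not increase under either operation. Finally, the Gagliardo--Nirenberg inequality — the cubic interaction is $H^1$-subcritical in $\mathbb R^5$ because $3<\tfrac{10}{3}=2^\ast$ — yields $R(\b{\rm u})\lesssim M(\b{\rm u})^{1/4}H(\b{\rm u})^{5/4}$, so $J(\b{\rm u})\gtrsim1$ and $J_{\min}:=\inf J>0$.

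Next I would take a minimizing sequence $\b{\rm u}_n$ of nonnegative radially decreasing functions and use the two scaling symmetries to normalize, say $M(\b{\rm u}_n)=H(\b{\rm u}_n)=1$; then $\b{\rm u}_n$ is bounded in $H^1_{\mathrm{rad}}(\mathbb R^5)\times H^1_{\mathrm{rad}}(\mathbb R^5)$, so along a subsequence $\b{\rm u}_n\rightharpoonup\b{\rm Q}_0$ weakly in ${\rm H}^1$. By the compact Strauss embedding $H^1_{\mathrm{rad}}(\mathbb R^5)\hookrightarrow L^3(\mathbb R^5)$ the functional $R$ is continuous along the sequence, so $R(\b{\rm Q}_0)=\lim_n R(\b{\rm u}_n)\gtrsim1$ and in particular $\b{\rm Q}_0\neq{\b 0}$; weak lower semicontinuity of $M$ and $H$ then gives $J(\b{\rm Q}_0)\le\liminf_n J(\b{\rm u}_n)=J_{\min}$, so $\b{\rm Q}_0$ is a minimizer (and, a posteriori, $M(\b{\rm Q}_0)=H(\b{\rm Q}_0)=1$ and the convergence is strong in ${\rm H}^1$).

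Finally I would extract the elliptic system from the Euler--Lagrange equation $\delta J(\b{\rm Q}_0)=0$; dividing by $J(\b{\rm Q}_0)$ this reads $\tfrac{1}{2M}\delta M+\tfrac{5}{2H}\delta H-\tfrac{2}{R}\delta R=0$, and testing against the two components gives, for $\b{\rm Q}_0=(u_0,v_0)$,
\[
\frac{u_0}{M}-\frac{5\,\Delta u_0}{H}=\frac{4\,u_0v_0}{R},\qquad
\frac{v_0}{M}-\frac{5\kappa\,\Delta v_0}{2H}=\frac{2\,u_0^2}{R}.
\]
Writing $u_0(x)=m\,\phi(nx)$ and $v_0(x)=m\,\varphi(nx)$ with $n^2=H/(5M)$ and $m=R/(4M)$ turns both equations simultaneously into $\phi-\Delta\phi=\phi\varphi$ and $2\varphi-\kappa\Delta\varphi=\phi^2$ (the fact that one and the same pair $(m,n)$ works for both is exactly where the exponents of $J$ enter); elliptic regularity gives $C^\infty$ smoothness and the maximum principle gives strict positivity of $\phi,\varphi$, while undoing the reductions to nonnegative radial profiles and restoring the two phases produces the general form $(e^{i\theta_1}m\phi(nx),e^{i\theta_2}m\varphi(nx))$. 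That all minimizers carry the same mass $M_{gs}$ then follows because the normalization together with $J_{\min}$ and the fixed $L^2$-norms of $\phi$ and $\varphi$ determine $M,H,R$ of any minimizer.

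The main obstacle is the compactness of the minimizing sequence: the functional is invariant under spatial translations and under both scalings, so a priori a minimizing sequence may escape to spatial infinity, spread out, or concentrate. Passing to radial functions (legitimate by the rearrangement step) removes translations and, through the Strauss lemma, restores compactness of the cubic term, while the normalization $M=H=1$ kills the two scaling degeneracies. One still has to check carefully that the rearrangement inequalities genuinely apply to the vector functional — in particular the Riesz inequality for $\int v\,u^2$ — and, in order to pin down the phase/scaling form of a general (not necessarily real, radial) minimizer, to track the equality cases in the diamagnetic and rearrangement inequalities.
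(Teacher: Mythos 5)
The paper itself offers no proof of this proposition --- it is quoted directly from \cite{Hamano2018} --- so there is no internal argument to compare against; judged on its own merits, the existence half of your sketch is sound and follows the standard variational route. The scaling and phase reductions, the symmetrization step (note the inequality you need for the cubic term is the three-function Hardy--Littlewood inequality $\int fgh\,{\rm d}x \le \int f^{*}g^{*}h^{*}\,{\rm d}x$, not Riesz's convolution inequality, but it is true), the normalization $M=H=1$, the compact Strauss embedding $H^{1}_{\mathrm{rad}}(\mathbb{R}^{5})\hookrightarrow L^{3}(\mathbb{R}^{5})$ (legitimate since $3<\tfrac{10}{3}$), and the Euler--Lagrange computation all check out; in particular your choice $n^{2}=H/(5M)$, $m=R/(4M)$ does convert both equations into the stated elliptic system. (Minor slip: $J$ is finite, not $+\infty$, when $R<0$; your phase-rotation reduction disposes of this anyway.)

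The genuine gaps concern the second half of the statement. First, the classification claim --- that \emph{every} minimizer is of the form $(e^{i\theta_1}m\phi(nx),e^{i\theta_2}m\varphi(nx))$ --- is exactly the part you defer to ``tracking equality cases,'' and that is a substantive piece of the proposition, not a footnote: one must start from an arbitrary minimizer (not from your already-symmetrized sequence), use that $(|u|,|v|)$ is again a minimizer to force equality in the diamagnetic inequality (constant phase on each component, and moreover the relative phase is constrained by $\cos^{2}(2\theta_1-\theta_2)=1$, since otherwise $R^{2}$ strictly increases upon passing to moduli --- so the phases in the stated form are not actually free), and equality in P\'olya--Szeg\H{o}/rearrangement to force radial decreasing profiles about a common center; none of this is carried out. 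Second, your argument for the equal-mass claim does not work as written: minimizers of the scale-invariant functional $J$ certainly do not share a common mass (the two scalings you exploited move $M$ freely), and invoking ``the fixed $L^{2}$-norms of $\phi$ and $\varphi$'' tacitly assumes uniqueness of the ground state, which you have not proved and do not need. The correct mechanism is the one the paper records immediately after the proposition: any ground state solves the elliptic system, hence satisfies the Pohozaev-type identities $M(\b{\rm Q}):H(\b{\rm Q}):R(\b{\rm Q})=1:5:4$ of \eqref{Q}; substituting these ratios into $J$ shows that $J(\b{\rm Q})$ is a fixed numerical multiple of $M(\b{\rm Q})^{1/2}$, and equating this with $J_{\min}$ pins down $M(\b{\rm Q})=M_{gs}$ independently of which ground state one takes.
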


\begin{remark}
For ground state $ \b{\rm Q} = (\phi, \varphi) $, we have the following scaling identity
\begin{align*}
& M \left( \lambda^{\alpha} \b{\rm Q}(\lambda^{\beta}\cdot) \right) + E \left( \lambda^{\alpha} \b{\rm Q}(\lambda^{\beta}\cdot) \right) \\
= & \lambda^{2\alpha - 5\beta}M(\b{\rm Q}) + \lambda^{2\alpha - 3\beta}H(\b{\rm Q}) - \lambda^{3\alpha - 5\beta}R(\b{\rm Q}) \quad \forall \lambda \in (0, \infty).
\end{align*}
Using variational derivatives and letting  $ \lambda = 1 $ in both sides of above identity, we can obtain
\[
\begin{aligned}
0 = & \Re \int_{\mathbb{R}^5} \left( 2\phi - 2\Delta \phi + 2\phi \varphi \right) \cdot \overline{ \left( \frac{\rm d}{{\rm d}\lambda} \Big|_{\lambda=1} \lambda^{\alpha} \phi(\lambda^{\beta}x) \right) } \\
& \quad \quad + \left( 2\varphi - \kappa \Delta \varphi + \phi^2 \right) \cdot \overline{ \left( \frac{\rm d}{{\rm d}\lambda} \Big|_{\lambda=1} \lambda^{\alpha} \varphi(\lambda^{\beta}x) \right) } {\rm d}x   \\
= & (2\alpha - 5\beta)M(\b{\rm Q}) + (2\alpha - 3\beta)H(\b{\rm Q}) - (3\alpha - 5\beta)R(\b{\rm Q}) \\
= & (2M + 2H - 3R)\alpha - (5M + 3H -5R)\beta, \quad \forall \alpha, \beta \in \mathbb{R}.
\end{aligned}
\]
This yields
\begin{equation}
M(\b{\rm Q}) : H(\b{\rm Q}) : R(\b{\rm Q}) = 1 : 5 : 4.
\label{Q}
\end{equation}
\end{remark}

Using the above proposition, we can directly obtain the following corollary.
\begin{corollary}[Gagliardo-Nirenberg inequality]
\begin{equation}
R(\b{\rm u}) \leqslant C_{GN} [M(\b{\rm u})]^{\frac{1}{4}} [H(\b{\rm u})]^{\frac{5}{4}},
\label{GN-inequality}
\end{equation}
where $ C_{GN} = 4 \cdot 5^{-\frac{5}{4}} M_{gs}^{-\frac{1}{2}} $.
The equality holds if and only if $ \b{\rm u} \in {\rm H}^{1}(\mathbb R^{5}) $ is a minimal element of functional $ J(\b{\rm u}) $,
that is to say $ \b{\rm u} \in \mathcal G $, or $ \b{\rm u} = {\b 0} $.
\end{corollary}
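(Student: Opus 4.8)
The plan is to read off \eqref{GN-inequality} directly from the variational characterisation in Proposition \ref{ground state}, the only preparatory observation being that the right-hand side of \eqref{GN-inequality} is non-negative. Hence if $R(\b{\rm u}) \le 0$ the inequality is trivial, and in that regime equality forces $R(\b{\rm u}) = 0$ together with $M(\b{\rm u})H(\b{\rm u}) = 0$; since $M(\b{\rm u}) = 0$ or $H(\b{\rm u}) = 0$ each forces $\b{\rm u} = {\b 0}$ in ${\rm H}^1(\mathbb{R}^5)$, equality here occurs only at $\b{\rm u} = {\b 0}$. So I would reduce to $\b{\rm u} \in {\rm H}^1(\mathbb{R}^5) \setminus \{{\b 0}\}$ with $R(\b{\rm u}) > 0$, where $J(\b{\rm u})$ is well defined.

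On that set the minimality statement $J(\b{\rm u}) \ge J_{\min}$ reads
\[
(M(\b{\rm u}))^{1/2}(H(\b{\rm u}))^{5/2}(R(\b{\rm u}))^{-2} \ge J_{\min},
\]
which, dividing by the positive quantity $R(\b{\rm u})^{-2}$ and taking square roots, rearranges to
\[
R(\b{\rm u}) \le J_{\min}^{-1/2}(M(\b{\rm u}))^{1/4}(H(\b{\rm u}))^{5/4}.
\]
It then remains only to identify the constant, i.e. to check $J_{\min}^{-1/2} = 4 \cdot 5^{-5/4} M_{gs}^{-1/2}$. I would do this by evaluating $J$ at the ground state $\b{\rm Q}$: the Pohozaev-type identity \eqref{Q} gives $M(\b{\rm Q}) : H(\b{\rm Q}) : R(\b{\rm Q}) = 1 : 5 : 4$, and since $M(\b{\rm Q}) = M_{gs}$ this yields $H(\b{\rm Q}) = 5 M_{gs}$ and $R(\b{\rm Q}) = 4 M_{gs}$, whence
\[
J_{\min} = J(\b{\rm Q}) = M_{gs}^{1/2}\,(5 M_{gs})^{5/2}\,(4 M_{gs})^{-2} = \frac{5^{5/2}}{16}\, M_{gs},
\]
so that $C_{GN} = J_{\min}^{-1/2} = 4 \cdot 5^{-5/4} M_{gs}^{-1/2}$, as stated.

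For the equality case I would trace the single inequality used: when $R(\b{\rm u}) > 0$, equality in \eqref{GN-inequality} holds if and only if $J(\b{\rm u}) = J_{\min}$, i.e. if and only if $\b{\rm u}$ attains the minimum of $J$, which by Proposition \ref{ground state} means exactly $\b{\rm u} \in \mathcal{G}$; together with the $R(\b{\rm u}) \le 0$ discussion this gives equality precisely when $\b{\rm u} \in \mathcal{G}$ or $\b{\rm u} = {\b 0}$. There is no genuine obstacle here, as the corollary is a formal consequence of Proposition \ref{ground state}; the only points needing a little care are handling the sign of $R(\b{\rm u})$ so that dividing by $R(\b{\rm u})^2$ and the equality analysis are legitimate, and keeping the exponent bookkeeping straight when evaluating $J(\b{\rm Q})$ from the ratio \eqref{Q}.
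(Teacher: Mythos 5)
Your proposal is correct and is essentially the paper's argument: the paper treats the corollary as an immediate consequence of Proposition \ref{ground state}, with the constant $C_{GN}=J_{\min}^{-1/2}=4\cdot 5^{-5/4}M_{gs}^{-1/2}$ identified by evaluating $J$ at $\b{\rm Q}$ via the ratio \eqref{Q}, exactly as you do (your handling of the sign of $R(\b{\rm u})$ and the equality case is a harmless extra precision).
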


\subsection{Some useful inequalities}\label{ineq}
In this subsection, we show some important inequalities which are will be used frequently in the following sections.

Denote the free Schr\"odinger group to $ e^{it\Delta} $, that is,
\begin{equation}
\left( e^{it\Delta} f \right) (x) = \frac{1}{(4\pi it)^{d/2}} \int_{\mathbb{R}^d} e^{\frac{i\vert x-y \vert^2}{4t}} f(y) {\rm d}y, \quad t \ne 0.
\label{group}
\end{equation}
It is easy to deduce the dispersive estimate
\begin{equation*}
\Vert e^{it\Delta} f \Vert_{L_{x}^{\infty}(\mathbb{R}^d)} \lesssim \vert t \vert^{-\frac{d}{2}} \Vert f \Vert_{L_{x}^{1}(\mathbb{R}^d)}, \quad t \ne 0,
\end{equation*}
which together with $ \Vert e^{it\Delta} f \Vert_{L_{x}^{2}(\mathbb{R}^d)} \equiv \Vert f \Vert_{L_{x}^{2}(\mathbb{R}^d)} $ yields by interpolation theorem,
\begin{equation}
\Vert e^{it\Delta} f \Vert_{L_{x}^{r}(\mathbb{R}^d)} \lesssim \vert t \vert^{-d(\frac{1}{2}-\frac{1}{r})} \Vert f \Vert_{L_{x}^{r^\prime}(\mathbb{R}^d)}, \quad t \ne 0,
\label{disp}
\end{equation}
for $ 2 \le r < \infty $.

Strichartz estimates for the propagator $e^{it\Delta}$ have been proved in \cite{Ginibre1992} and \cite{Strichartz1977}.
Combining these with the Christ--Kiselev lemma \cite{CK} and the endpoint case in \cite{Keel1998}, we arrive at the following by $ TT^{\ast} $ method:

\begin{theorem}[Strichartz estimates]\label{strichartz}
The solution $ u $ to
\[
iu_{t} + \Delta u = f
\]
on an interval $ I \ni t_{0} $ obeys
\begin{equation}
\Vert u \Vert_{L^{q}_{t}L^{r}_{x}(I \times \mathbb{R}^{d})} \le C \left( \Vert u(t_{0}) \Vert_{L^{2}(\mathbb{R}^{d})} + \Vert f \Vert_{L^{\tilde{q}^\prime}_{t}L^{\tilde{r}^\prime}_{x}(I \times \mathbb{R}^{d})} \right)
\label{Strichartz}
\end{equation}
whenever $ (q,r), (\tilde{q},\tilde{r}) \in \Lambda_{0} $ as in \eqref{Lambda}, $ 2 \le q, \tilde{q} \le \infty $, and $ q \neq \tilde{q} $.
\end{theorem}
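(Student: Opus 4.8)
The plan is to follow the classical $TT^{*}$ route: reduce to a single-propagator bound, prove that bound from the dispersive estimate \eqref{disp} together with a one-dimensional Hardy--Littlewood--Sobolev inequality (borrowing the endpoint from \cite{Keel1998}), and then pass to the retarded Duhamel integral by the Christ--Kiselev lemma. First I would write the solution through Duhamel's formula,
\[
u(t) = e^{i(t-t_{0})\Delta}u(t_{0}) - i\int_{t_{0}}^{t} e^{i(t-s)\Delta}f(s)\,{\rm d}s,
\]
so that \eqref{Strichartz} splits into (i) the homogeneous estimate $\Vert e^{it\Delta}\psi\Vert_{L^{q}_{t}L^{r}_{x}(\mathbb{R}\times\mathbb{R}^{d})}\lesssim\Vert\psi\Vert_{L^{2}}$ for every admissible pair $(q,r)\in\Lambda_{0}$, and (ii) the inhomogeneous estimate for the retarded operator $f\mapsto\int_{t_{0}}^{t}e^{i(t-s)\Delta}f(s)\,{\rm d}s$ from $L^{\tilde q'}_{t}L^{\tilde r'}_{x}$ to $L^{q}_{t}L^{r}_{x}$. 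The endpoints $q=\infty$ (resp.\ $\tilde q=\infty$) reduce at once to the $L^{2}$ isometry $\Vert e^{it\Delta}\psi\Vert_{L^{2}}=\Vert\psi\Vert_{L^{2}}$ and need nothing further.

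For (i) I would invoke the $TT^{*}$ principle: setting $T\psi:=e^{it\Delta}\psi$, the bound $\Vert T\psi\Vert_{L^{q}_{t}L^{r}_{x}}\lesssim\Vert\psi\Vert_{L^{2}}$ is equivalent to $\Vert TT^{*}g\Vert_{L^{q}_{t}L^{r}_{x}}\lesssim\Vert g\Vert_{L^{q'}_{t}L^{r'}_{x}}$, where $TT^{*}g=\int_{\mathbb{R}}e^{i(t-s)\Delta}g(s)\,{\rm d}s$. Estimating the inner spatial norm by \eqref{disp} gives $\Vert e^{i(t-s)\Delta}g(s)\Vert_{L^{r}_{x}}\lesssim|t-s|^{-d(\frac12-\frac1r)}\Vert g(s)\Vert_{L^{r'}_{x}}$, and the admissibility relation \eqref{Lambda} with $s=0$ identifies the exponent as $d(\tfrac12-\tfrac1r)=\tfrac2q$; hence the $t$-norm is governed by convolution against $|t|^{-2/q}$, and the one-dimensional Hardy--Littlewood--Sobolev inequality closes the estimate, its scaling relation $\tfrac1q+1=\tfrac1{q'}+\tfrac2q$ being automatically satisfied, for $2<q<\infty$. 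The endpoint $q=2$, where $|t|^{-1}$ is not locally integrable and HLS is unavailable, is exactly the Keel--Tao endpoint, so there I would cite \cite{Keel1998}. Dualizing, this also yields $\bigl\Vert\int_{\mathbb{R}}e^{-is\Delta}g(s)\,{\rm d}s\bigr\Vert_{L^{2}}=\Vert T^{*}g\Vert_{L^{2}}\lesssim\Vert g\Vert_{L^{q'}_{t}L^{r'}_{x}}$ for every admissible $(q,r)$.

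For (ii) I would first establish the \emph{untruncated} estimate $\bigl\Vert\int_{\mathbb{R}}e^{i(t-s)\Delta}f(s)\,{\rm d}s\bigr\Vert_{L^{q}_{t}L^{r}_{x}}\lesssim\Vert f\Vert_{L^{\tilde q'}_{t}L^{\tilde r'}_{x}}$ by factoring the operator as $T\tilde T^{*}$, where $\tilde T$ is the propagator associated with the pair $(\tilde q,\tilde r)$, and composing the bound on $T$ from step (i) with the adjoint bound on $\tilde T^{*}$ from the dualization above. To replace $\int_{\mathbb{R}}$ by the retarded integral $\int_{t_{0}}^{t}$, i.e.\ to insert the sharp time cutoff $\chi_{\{s<t\}}$, I would apply the Christ--Kiselev lemma \cite{CK}: since $q\ne\tilde q$ and $q,\tilde q\ge2$ one has $\tfrac1q+\tfrac1{\tilde q}<1$, equivalently $q>\tilde q'$, which is precisely the gain in time integrability that licenses that lemma. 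Combining (i), the dualized bound, and this retarded estimate with Duhamel's formula yields \eqref{Strichartz}.

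The main obstacle is the endpoint $q=2$ (equivalently $\tilde q=2$ for the dual bound): the soft $TT^{*}$-plus-HLS scheme breaks down there because the temporal kernel $|t-s|^{-1}$ is not locally integrable, so the non-endpoint homogeneous estimate must be supplemented by the Keel--Tao dyadic decomposition of the kernel in $|t-s|$ together with bilinear real interpolation, which is the content of \cite{Keel1998} and which we cite rather than reprove. Everything else --- Duhamel's formula, the $TT^{*}$ equivalence, the composition of the two $TT^{*}$-type bounds for mismatched pairs, and Christ--Kiselev (whose hypothesis is guaranteed here by $q\ne\tilde q$) --- is routine.
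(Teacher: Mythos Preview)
Your proposal is correct and mirrors exactly the approach the paper itself indicates: the paper does not supply a proof at all but simply states that the result follows by the $TT^{*}$ method from the classical Strichartz and Ginibre--Velo estimates, together with the Christ--Kiselev lemma \cite{CK} for the retarded integral and the Keel--Tao result \cite{Keel1998} for the endpoint. Your write-up is a faithful and accurate expansion of precisely that sketch.
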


Thanks to \eqref{Q}, which shows us the relation between the mass and the energy of ground state,
we can find the lower bound of the energy in local.
\begin{lemma}[Coercivity $I$]\label{coer1}
For $ (\kappa, d) = (\frac{1}{2}, 5) $ in the \eqref{NLS system}.
If $ M(\b{\rm u}_0) E(\b{\rm u}_0) \le (1 - \delta) M(\b{\rm Q}) E(\b{\rm Q}) $
and $ M(\b{\rm u}_0) H(\b{\rm u}_0) < M(\b{\rm Q}) H(\b{\rm Q}) $, then there exists $ \delta^\prime = \delta^\prime (\delta) > 0 $ so that
\begin{equation}
M(\b{\rm u}) H(\b{\rm u}) \le (1 - \delta^\prime) M(\b{\rm Q}) H(\b{\rm Q})
\label{coe1}
\end{equation}
for all $ t \in I $,  where $ \b{\rm u} : I \times \mathbb{R}^5 \to \mathbb{C}^2 $ is the maximal-lifespan solution to \eqref{NLS system}.
In particular, $ I = \mathbb{R} $ and $ \b{\rm u} $ is uniformly bounded in $ {\rm H}^1(\mathbb{R}^5) $.
\end{lemma}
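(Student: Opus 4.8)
The plan is to reduce the assertion to a one‑variable analysis in the spirit of the Kenig–Merle dichotomy. Introduce the function $f(y) := y - C_{GN}\, y^{5/4}$ and track the conserved‑quantity combination $y(t) := M(\b{\rm u}_0)\,H(\b{\rm u}(t))$ along the flow. First I would record the ground‑state normalisations forced by \eqref{Q}: writing $M(\b{\rm Q}) = M_{gs}$ gives $H(\b{\rm Q}) = 5M_{gs}$, $R(\b{\rm Q}) = 4M_{gs}$, hence $E(\b{\rm Q}) = H(\b{\rm Q}) - R(\b{\rm Q}) = M_{gs}$, so $M(\b{\rm Q})E(\b{\rm Q}) = M_{gs}^2$ and $y_Q := M(\b{\rm Q})H(\b{\rm Q}) = 5M_{gs}^2$. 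Inserting $C_{GN} = 4\cdot 5^{-5/4}M_{gs}^{-1/2}$ into $f$ one checks $f(y_Q) = 5M_{gs}^2 - 4M_{gs}^2 = M_{gs}^2 = M(\b{\rm Q})E(\b{\rm Q})$ and $f'(y_Q) = 0$; since $f'(y) = 1 - \tfrac54 C_{GN}y^{1/4}$ is decreasing, $f$ is strictly increasing on $[0,y_Q]$ and $f(y_Q) = \max_{y\ge 0} f(y) > 0$.

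Next, from $E = H - R$ together with the Gagliardo–Nirenberg inequality \eqref{GN-inequality}, multiplying by $M(\b{\rm u})$ and invoking conservation of mass and energy, I would obtain for every $t \in I$
\[
M(\b{\rm u}_0)E(\b{\rm u}_0) \;=\; M(\b{\rm u}(t))E(\b{\rm u}(t)) \;\ge\; M(\b{\rm u}(t))H(\b{\rm u}(t)) - C_{GN}\big(M(\b{\rm u}(t))H(\b{\rm u}(t))\big)^{5/4} \;=\; f\big(y(t)\big).
\]
Then a continuity/bootstrap argument pins $y(t)$ to the increasing branch of $f$: by the second hypothesis $y(0) < y_Q$, and $t\mapsto y(t)$ is continuous because $\b{\rm u}\in C(I,{\rm H}^1)$; if $y$ reached $y_Q$ at some $t^\ast \in I$, the displayed inequality would give $M(\b{\rm u}_0)E(\b{\rm u}_0)\ge f(y_Q) = M(\b{\rm Q})E(\b{\rm Q})$, contradicting $M(\b{\rm u}_0)E(\b{\rm u}_0)\le (1-\delta)M(\b{\rm Q})E(\b{\rm Q}) < M(\b{\rm Q})E(\b{\rm Q})$. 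Hence $y(t) < y_Q$ on all of $I$.

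Since $f\colon [0,y_Q]\to[0,f(y_Q)]$ is a strictly increasing bijection, the bound $f(y(t)) \le (1-\delta)f(y_Q)$ now forces $y(t)\le y^\ast$, where $y^\ast\in(0,y_Q)$ is the unique root of $f(y^\ast) = (1-\delta)f(y_Q)$; setting $\delta' := 1 - y^\ast/y_Q > 0$ — which depends only on $\delta$, as $C_{GN}$, $M_{gs}$, $y_Q$ are fixed by the equation — gives exactly \eqref{coe1}. Finally, \eqref{coe1} bounds $H(\b{\rm u}(t))$ uniformly on $I$, and since for $\kappa = \tfrac12$ the kinetic energy $H$ is comparable to $\|\nabla u(t)\|_{L^2}^2 + \|\nabla v(t)\|_{L^2}^2$ while $M$ is conserved, we get $\sup_{t\in I}\|\b{\rm u}(t)\|_{{\rm H}^1(\mathbb R^5)} < \infty$; the blow‑up criterion from the local well‑posedness theory then precludes finite‑time blow‑up, so $I = I_{\max} = \mathbb R$ and $\b{\rm u}$ is uniformly bounded in ${\rm H}^1$.

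There is essentially no serious obstacle here; the one point requiring care is the \emph{direction} in the continuity argument — one must make sure the bootstrap confines $y(t)$ to the \emph{increasing} branch of $f$, so that smallness of $f(y(t))$ translates into $y(t)$ being bounded away from $y_Q$ rather than into $y(t)$ being large. This is precisely where the hypothesis $M(\b{\rm u}_0)H(\b{\rm u}_0) < M(\b{\rm Q})H(\b{\rm Q})$, together with continuity of the ${\rm H}^1$‑flow, is used, and it is also what makes $\delta'$ genuinely positive and $\delta$‑dependent only.
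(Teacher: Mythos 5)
Your proposal is correct and follows essentially the same route as the paper, which sketches this argument in Remark \ref{con}: combine the sharp Gagliardo--Nirenberg inequality \eqref{GN-inequality} (with $C_{GN}$ expressed through the ground-state ratios \eqref{Q}) with conservation of mass and energy to get the one-variable inequality $1-\delta \ge 5y-4y^{5/4}$ for $y(t)=M(\b{\rm u})H(\b{\rm u})/(M(\b{\rm Q})H(\b{\rm Q}))$, then use $y(0)<1$ and continuity of $y$ to confine $y(t)$ to the increasing branch, which is exactly your unnormalized $f$-analysis. Your write-up merely supplies the details (monotonicity of $f$, the root $y^\ast$, and the blow-up criterion giving $I=\mathbb{R}$) that the paper leaves implicit.
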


\begin{remark}\label{con}
Using \eqref{Q}, \eqref{GN-inequality}, \eqref{coe1}, and the conservation of mass and energy, we have
\begin{equation*}
1 - \delta \ge \frac{M(\b{\rm u}) E(\b{\rm u})}{M(\b{\rm Q}) E(\b{\rm Q})} \ge 5 y - 4 y^{\frac{5}{4}},
\end{equation*}
where
\begin{equation*}
y(t) = \frac{M(\b{\rm u}) H(\b{\rm u})}{ M(\b{\rm Q}) H(\b{\rm Q})} \in C(I).
\end{equation*}
Taking into account the continuity of $ y(t) $ and the case of $ t = 0 $,  we can easily \eqref{coe1} holds.

At the same time, the calculation above suggests $ y \ne 1 $, i.e. $ M(\b{\rm u}_0) H(\b{\rm u}_0) \ne M(\b{\rm Q}) H(\b{\rm Q}) $ as long as $ M(\b{\rm u}_0) E(\b{\rm u}_0) < M(\b{\rm Q}) E(\b{\rm Q}) $, because the highest point in the graph of $ 5 y - 4 y^{\frac{5}{4}} $ is $ (y, 5 y - 4 y^{\frac{5}{4}}) = (1, 1) $.
\end{remark}

\begin{remark}
In fact, under the conditions of Lemma \ref{coer1}, \eqref{coe1} holds for any $ t \in I $, the maximal lifespan of $ \b{\rm u}(t) $. 
In particular, $ H(\b{\rm u}) $ is bounded and hence the solution to \eqref{NLS system} $ \b{\rm u} $ is global. 
\end{remark}

\begin{lemma}
[Coercivity $II$]\label{coer2}
Suppose $ M(\b{\rm u}) H(\b{\rm u}) \le (1 - \delta) M(\b{\rm Q}) H(\b{\rm Q}) $,
then there exists $ \delta^\prime = \delta^\prime (\delta) > 0 $ such that
\begin{equation}
4 H(\b{\rm u}^{\xi}) - 5 R(\b{\rm u}) \ge \delta^{\prime} H(\b{\rm u}^{\xi}),
\label{coe2}
\end{equation}
where $ \b{\rm u}^{\xi} $ be as in Proposition \ref{decay}.
\end{lemma}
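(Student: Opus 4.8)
The plan is to deduce \eqref{coe2} from the ordinary ($\xi=0$) coercivity inequality, so that the real work is to check that the Galilean boost $\b{\rm u}\mapsto\b{\rm u}^{\xi}=(e^{i\kappa x\cdot\xi}u,e^{ix\cdot\xi}v)$, $\kappa=\tfrac12$, costs us nothing. First I would record its two invariances. Mass: $M(\b{\rm u}^{\xi})=M(\b{\rm u})$, immediate since $|e^{i\theta}|\equiv1$. Potential energy: since
\[
\overline{(e^{ix\cdot\xi}v)}\,(e^{i\kappa x\cdot\xi}u)^{2}=e^{i(2\kappa-1)x\cdot\xi}\,\overline{v}\,u^{2},
\]
the mass-resonance relation $2\kappa-1=0$ makes the phase disappear, giving $R(\b{\rm u}^{\xi})=R(\b{\rm u})$. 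This is the only step where $\kappa=\tfrac12$ is genuinely used; for $\kappa\neq\tfrac12$ this exact invariance fails.

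Next I would compute the effect of the boost on the kinetic energy. Using $\nabla(e^{i\kappa x\cdot\xi}u)=e^{i\kappa x\cdot\xi}(\nabla u+i\kappa\xi u)$ and the analogue for $v$, one obtains the exact identity
\[
H(\b{\rm u}^{\xi})=H(\b{\rm u})+\tfrac{|\xi|^{2}}{4}M(\b{\rm u})+\xi\cdot P(\b{\rm u}),
\]
where $P(\b{\rm u})$ is the momentum. After the usual Galilean normalization $P(\b{\rm u}_0)=0$ — legitimate because $M$, $E$ and $H$ do not increase when one boosts to the zero-momentum frame, so the hypotheses of Theorem \ref{main} and of this lemma are preserved — momentum conservation gives $P(\b{\rm u}(t))\equiv0$, so the last term vanishes and
\[
H(\b{\rm u}^{\xi})\ \ge\ H(\b{\rm u})\qquad\text{for every }\xi .
\]

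Finally I would recover the $\xi=0$ coercivity from the hypothesis, exactly as in the computation of Remark \ref{con}: by the sharp Gagliardo--Nirenberg inequality \eqref{GN-inequality}, the ratios \eqref{Q} (which give $H(\b{\rm Q})=5M(\b{\rm Q})$, hence $5C_{GN}\,(M(\b{\rm Q})H(\b{\rm Q}))^{1/4}=4$), and $M(\b{\rm u})H(\b{\rm u})\le(1-\delta)M(\b{\rm Q})H(\b{\rm Q})$,
\[
4H(\b{\rm u})-5R(\b{\rm u})\ \ge\ 4H(\b{\rm u})-5C_{GN}M(\b{\rm u})^{1/4}H(\b{\rm u})^{5/4}\ \ge\ 4\bigl(1-(1-\delta)^{1/4}\bigr)H(\b{\rm u})=:c_{0}H(\b{\rm u}),
\]
with $0<c_{0}=c_{0}(\delta)\le4$. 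Combining the three ingredients,
\[
4H(\b{\rm u}^{\xi})-5R(\b{\rm u})=\bigl(4H(\b{\rm u})-5R(\b{\rm u})\bigr)+4\bigl(H(\b{\rm u}^{\xi})-H(\b{\rm u})\bigr)\ \ge\ c_{0}H(\b{\rm u})+c_{0}\bigl(H(\b{\rm u}^{\xi})-H(\b{\rm u})\bigr)=c_{0}H(\b{\rm u}^{\xi}),
\]
so $\delta'=c_{0}$ works. The one genuine obstacle is the kinetic-energy comparison of the second paragraph: the boost parameter $\xi$ from \eqref{xi} is only a \emph{local} momentum quotient and may be large, so one cannot hope that $M(\b{\rm u})H(\b{\rm u}^{\xi})$ stays subcritical and feed $\b{\rm u}^{\xi}$ directly into Gagliardo--Nirenberg; the point is that $\kappa=\tfrac12$ confines the entire effect of the boost to the perfect-square term $\tfrac{|\xi|^{2}}{4}M(\b{\rm u})$ in $H$, which, after normalizing $P=0$, has the right sign to make \eqref{coe2} \emph{easier} rather than harder.
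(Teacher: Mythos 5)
Your boost identity for $H$ and your $\xi=0$ coercivity computation are both correct, but the step you yourself single out as the crux --- disposing of the cross term $\xi\cdot P(\b{\rm u})$ by boosting to the zero-momentum frame at $t=0$ and invoking momentum conservation --- does not survive contact with the way the lemma is actually used, and the lemma as stated gives you no license to replace $\b{\rm u}$ by a renormalized solution. Inequality \eqref{coe2} is invoked in Lemma \ref{coerb} (and thence in Proposition \ref{decay}) with $\b{\rm u}$ replaced by the truncated function $\chi_R\b{\rm u}(t)$, $\chi_R(x)=\Gamma\left(\frac{x-s}{R}\right)$, and with the $(t,s,R)$-dependent boost \eqref{xi}. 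The momentum relevant there is the localized quantity $\Im\int\left(\overline{u}\nabla u+\tfrac12\overline{v}\nabla v\right)\Gamma^{2}\left(\frac{x-s}{R}\right){\rm d}x$, which is neither conserved nor removable by one global Galilean change of frame; the whole reason the paper introduces the window-dependent $\xi(t,s,R)$ is that this local momentum is generically nonzero even when the total momentum vanishes. Worse, since \eqref{xi} picks $\xi$ so as to (essentially) minimize the localized kinetic energy, in the application one expects $H(\b{\rm u}_R^{\xi})\le H(\b{\rm u}_R)$ --- the \emph{reverse} of your key inequality $H(\b{\rm u}^{\xi})\ge H(\b{\rm u})$ --- and then your final chain breaks: writing $4H(\b{\rm u}^{\xi})-5R(\b{\rm u})=c_0H(\b{\rm u}^{\xi})+(c_0-4)\left(H(\b{\rm u})-H(\b{\rm u}^{\xi})\right)+\left(4H(\b{\rm u})-5R(\b{\rm u})-c_0H(\b{\rm u})\right)$, the middle term is $\le 0$ when $H(\b{\rm u}^{\xi})<H(\b{\rm u})$, so you cannot pass from $c_0H(\b{\rm u})$ to $c_0H(\b{\rm u}^{\xi})$.

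The repair is simpler than your normalization and is what the paper does: use only the boost invariance $M(\b{\rm u}^{\xi})=M(\b{\rm u})$ and $R(\b{\rm u}^{\xi})=R(\b{\rm u})$ (the latter is where $\kappa=\tfrac12$ enters, exactly as you computed), and apply the sharp Gagliardo--Nirenberg inequality \eqref{GN-inequality} to whichever of $\b{\rm u}$, $\b{\rm u}^{\xi}$ has the smaller kinetic energy. If $H(\b{\rm u}^{\xi})\ge H(\b{\rm u})$, then $5R(\b{\rm u})\le 4(1-\delta)^{\frac14}H(\b{\rm u})\le 4(1-\delta)^{\frac14}H(\b{\rm u}^{\xi})$ by the hypothesis and \eqref{Q}; if $H(\b{\rm u}^{\xi})<H(\b{\rm u})$, then $M(\b{\rm u})H(\b{\rm u}^{\xi})\le M(\b{\rm u})H(\b{\rm u})\le(1-\delta)M(\b{\rm Q})H(\b{\rm Q})$, so Gagliardo--Nirenberg applied to $\b{\rm u}^{\xi}$ gives $5R(\b{\rm u})=5R(\b{\rm u}^{\xi})\le 4(1-\delta)^{\frac14}H(\b{\rm u}^{\xi})$ directly. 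Either way \eqref{coe2} holds with $\delta'=4\left(1-(1-\delta)^{\frac14}\right)$, for an arbitrary $\xi$ and with no hypothesis on any momentum --- in particular for the truncated functions needed in Lemma \ref{coerb}. This also answers your closing worry: a large $\xi$ only makes $H(\b{\rm u}^{\xi})$ large, which is the harmless case; the case where feeding $\b{\rm u}^{\xi}$ into Gagliardo--Nirenberg looks dangerous, namely $H(\b{\rm u}^{\xi})<H(\b{\rm u})$, is precisely the case where it is legitimate.
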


\begin{proof}
Using the fact that $ C_{GN} = 4 \cdot 5^{-\frac{5}{4}} M_{gs}^{-\frac{1}{2}} = 4 \cdot 5^{-\frac{5}{4}} \left[ M(\b{\rm Q}) \right]^{-\frac{1}{2}} = \frac{4}{5} \left[ M(\b{\rm Q}) H(\b{\rm Q}) \right]^{-\frac{1}{4}} $, we have
\begin{equation*}
\begin{aligned}
R(\b{\rm u}) \le C_{GN} \left[ M(\b{\rm u}) \right]^{\frac{1}{4}} \left[ H(\b{\rm u}) \right]^{\frac{5}{4}}
= \frac{4}{5} \left[ \frac{M(\b{\rm u}) H(\b{\rm u})}{M{\b{\rm Q}} H(\b{\rm Q})} \right]^{\frac{1}{4}} H(\b{\rm u}).
\end{aligned}
\end{equation*}

Owing to $ M(\b{\rm u}) = M(\b{\rm u}^{\xi}) $ and $ R(\b{\rm u}) = R(\b{\rm u}^{\xi}) $, we know furthermore
\begin{equation*}
\begin{aligned}
R(\b{\rm u}) = R(\b{\rm u}^{\xi}) \le & \frac{4}{5} \inf_{\xi \in \mathbb{R}^5} \left\{ \left[ \frac{M(\b{\rm u}) H(\b{\rm u}^{\xi})}{M{\b{\rm Q}} H(\b{\rm Q})} \right]^{\frac{1}{4}} H(\b{\rm u}^{\xi}) \right\} \\
\le & \frac{4}{5} \inf_{\xi \in \mathbb{R}^5} \left[ \frac{M(\b{\rm u}) H(\b{\rm u}^{\xi})}{M{\b{\rm Q}} H(\b{\rm Q})} \right]^{\frac{1}{4}} \inf_{\xi \in \mathbb{R}^5} H(\b{\rm u}^{\xi}) \\
\le & \frac{4}{5} (1 - \delta)^{\frac{1}{4}} H(\b{\rm u}^{\xi}),
\end{aligned}
\end{equation*}
which implies \eqref{coe2} holds.
\end{proof}

\begin{lemma}
[Coercivity on balls]\label{coerb}
There exists $ R = R(\delta,  M(\b{\rm u}),  \b{\rm Q}) > 0 $ sufficiently large such that
\begin{equation}
\sup_{t \in \mathbb{R}} M(\b{\rm u}^{\xi}_R) H(\b{\rm u}^{\xi}_R) < (1 - \delta) M(\b{\rm Q}) H(\b{\rm Q}),
\label{coe}
\end{equation}
where $ \b{\rm u}^{\xi}_R(x) = \chi_R(x) \b{\rm u}^{\xi}(x) $ for $ \chi_R $, a smooth cut function on $ B(0, R) \subset \mathbb{R}^5 $.
In particular, by Lemma \ref{coer2}, there exists $ \delta^\prime = \delta^\prime(\delta) > 0 $ so that
\begin{equation}
4 H(\b{\rm u}^{\xi}_R) - 5 R(\b{\rm u}_R) \ge \delta^{\prime} H(\b{\rm u}^{\xi}_R)
\label{coe3}
\end{equation}
uniformly for $ t \in \mathbb{R} $.
\end{lemma}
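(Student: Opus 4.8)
The plan is to derive both \eqref{coe} and \eqref{coe3} from Coercivity $I$ (Lemma \ref{coer1}), using only conservation of mass, the fact that the Galilean phase commutes with the spatial cutoff, and a standard cutoff commutator estimate; the choice of $\xi$ in \eqref{xi} enters exactly once, to guarantee that boosting cannot raise the localized kinetic energy.

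First I would record the consequences of Lemma \ref{coer1}: the solution is global, $\b{\rm u}$ is uniformly bounded in ${\rm H}^1$, and there is $\delta_* > 0$ with $\sup_{t\in\mathbb{R}} M(\b{\rm u})H(\b{\rm u}) \le (1-\delta_*)M(\b{\rm Q})H(\b{\rm Q})$; I assume $0 < \delta < \delta_*$, and note $\sup_t M(\b{\rm u}) = M(\b{\rm u}_0)$ by conservation of mass. Two algebraic facts are used throughout. Since $e^{\kappa i x\cdot\xi}$ and $e^{i x\cdot\xi}$ have modulus one, the cutoff commutes with the Galilean transform: $\b{\rm u}^\xi_R = \chi_R\b{\rm u}^\xi = (\chi_R\b{\rm u})^\xi$. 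In particular $M(\b{\rm u}^\xi_R) = M(\chi_R\b{\rm u}) \le M(\b{\rm u})$; and, because $\kappa = \tfrac12$ makes the phases in $\overline{v^\xi}(u^\xi)^2$ cancel, $R$ is unchanged by the Galilean transform, so $R(\b{\rm u}^\xi_R) = R(\chi_R\b{\rm u}) = R(\b{\rm u}_R)$.

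The core estimate is the uniform bound $H(\b{\rm u}^\xi_R) \le H(\b{\rm u}) + C R^{-2}M(\b{\rm u}_0)$. Expanding $\nabla\big(e^{\kappa i x\cdot\xi}\chi_R u\big)$ and $\nabla\big(e^{i x\cdot\xi}\chi_R v\big)$ gives, for every $\eta\in\mathbb{R}^5$,
\[
H\big((\chi_R\b{\rm u})^\eta\big) = H(\chi_R\b{\rm u}) - \eta\cdot \vec p_R + q_R|\eta|^2,\qquad q_R := \int_{\mathbb{R}^5} \chi_R^2\Big(\kappa^2|u|^2 + \tfrac{\kappa}{2}|v|^2\Big)\,{\rm d}x \ge 0,
\]
a nonnegative quadratic in $\eta$ whose minimizer is precisely the vector $\xi$ singled out in \eqref{xi} (and when $q_R = 0$ both sides vanish and $\xi = 0$). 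Hence $H(\b{\rm u}^\xi_R) = \inf_{\eta\in\mathbb{R}^5} H\big((\chi_R\b{\rm u})^\eta\big) \le H\big((\chi_R\b{\rm u})^0\big) = H(\chi_R\b{\rm u})$. Next, the identity $\|\nabla(\chi_R f)\|_{L^2}^2 = \int \chi_R^2|\nabla f|^2 - \int \chi_R\,\Delta\chi_R\,|f|^2$ together with $\|\chi_R\Delta\chi_R\|_{L^\infty}\lesssim R^{-2}$ gives $H(\chi_R\b{\rm u}) \le H(\b{\rm u}) + C R^{-2}M(\b{\rm u})$. Combining this with $M(\b{\rm u}^\xi_R)\le M(\b{\rm u})$ and Lemma \ref{coer1},
\[
\sup_{t} M(\b{\rm u}^\xi_R)H(\b{\rm u}^\xi_R) \le \sup_t M(\b{\rm u})H(\b{\rm u}) + C R^{-2}M(\b{\rm u}_0)^2 \le (1-\delta_*)M(\b{\rm Q})H(\b{\rm Q}) + C R^{-2}M(\b{\rm u}_0)^2,
\]
so it suffices to pick $R = R(\delta, M(\b{\rm u}_0), \b{\rm Q})$ so large that $C R^{-2}M(\b{\rm u}_0)^2 < (\delta_* - \delta)M(\b{\rm Q})H(\b{\rm Q})$; this is \eqref{coe}. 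For \eqref{coe3} I would apply Lemma \ref{coer2} with $\b{\rm w} := \b{\rm u}^\xi_R$: its hypothesis $M(\b{\rm w})H(\b{\rm w}) \le (1-\delta)M(\b{\rm Q})H(\b{\rm Q})$ is \eqref{coe}, so there is $\delta^\prime = \delta^\prime(\delta) > 0$ with $4H(\b{\rm w}^\eta) - 5R(\b{\rm w}) \ge \delta^\prime H(\b{\rm w}^\eta)$ for all $\eta$; taking $\eta = 0$ and using $R(\b{\rm w}) = R(\b{\rm u}_R)$ yields \eqref{coe3}, uniformly in $t$.

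The only step demanding real computation is the identification of $\xi$ in \eqref{xi} with the minimizer of $\eta\mapsto H\big((\chi_R\b{\rm u})^\eta\big)$ — equivalently, the statement that this $\xi$ makes the $\Gamma^2(\tfrac{\cdot-s}{R})$-localized momentum of $\b{\rm u}^\xi$ vanish, so that truncating and then Galilean-boosting can only decrease the kinetic energy; everything else is conservation of mass, the cutoff commutator bound, and bookkeeping with Lemmas \ref{coer1}--\ref{coer2}. The uniformity in $t$ is automatic, since it is already built into Lemma \ref{coer1} and conservation of mass.
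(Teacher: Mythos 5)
Your overall strategy is the same as the paper's: the cutoff only decreases the mass, the identity $\int\chi_R^2|\nabla f|^2\,{\rm d}x=\int|\nabla(\chi_R f)|^2\,{\rm d}x+\int\chi_R\Delta\chi_R|f|^2\,{\rm d}x$ gives the $O(R^{-2}M(\b{\rm u}))$ loss in kinetic energy, $R$ is then taken large, and \eqref{coe3} follows from Lemma \ref{coer2} together with the $\kappa=\tfrac12$ phase cancellation $R(\b{\rm u}^{\xi}_R)=R(\b{\rm u}_R)$. In fact you supply the one step the paper's two-line proof glosses over: the paper bounds $H(\b{\rm u}^{\xi}_R)\le H(\b{\rm u}^{\xi})+O(R^{-2}M(\b{\rm u}))$ and stops, without explaining why the boosted kinetic energy is still below the threshold; your observation that the cutoff commutes with the phase and that $\eta\mapsto H((\chi_R\b{\rm u})^{\eta})$ is a nonnegative quadratic minimized by the Galilean parameter is exactly what is needed there.

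However, the identification you defer as ``the only step demanding real computation'' does not check out against \eqref{xi} as printed, and it is the load-bearing step. Writing $\vec p_R=\int\chi_R^2\Im\left(2\kappa u\overline{\nabla u}+\kappa v\overline{\nabla v}\right){\rm d}x$ and $D=\int\chi_R^2\left(2\kappa|u|^2+|v|^2\right){\rm d}x$, one finds $H((\chi_R\b{\rm u})^{\eta})=H(\chi_R\b{\rm u})-\eta\cdot\vec p_R+q_R|\eta|^2$ with $2q_R=\kappa D$, so the minimizer is $\vec p_R/(2q_R)=\vec p_R/(\kappa D)$, whereas \eqref{xi} gives $\xi=-\vec p_R/D$, i.e. the minimizer equals $-\xi/\kappa=-2\xi$ at $\kappa=\tfrac12$. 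With the printed $\xi$ the linear term has the wrong sign, so $H((\chi_R\b{\rm u})^{\xi})=H(\chi_R\b{\rm u})+q_R|\xi|^2+|\vec p_R|^2/D\ge H(\chi_R\b{\rm u})$, and your key inequality $H(\b{\rm u}^{\xi}_R)\le H(\chi_R\b{\rm u})$ fails whenever the localized momentum is nonzero; since the threshold in \eqref{coe} is sharp, this cannot be absorbed. The discrepancy is really a sign/normalization typo in \eqref{xi}: the property the Morawetz proof actually invokes, namely $\int\Gamma^2\left(\tfrac{x-s}{R}\right)\Im\left(2\kappa u^{\xi}\overline{\nabla u^{\xi}}+\kappa v^{\xi}\overline{\nabla v^{\xi}}\right){\rm d}x=0$, forces precisely $\xi=\vec p_R/(\kappa D)$, which is your minimizer. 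So with the corrected $\xi$ your argument is complete (and more detailed than the paper's), but as a verification against the stated formula it has a genuine gap that carrying out the deferred computation would have exposed.
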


\begin{proof}
First note that multiplication by $ \chi $ only decreases the $ L^{2}(\mathbb{R}^5) $-norm, that is
\begin{equation*}
M \left( \chi_{R} \b{\rm u}^{\xi}(t) \right) \leqslant M \left( \b{\rm u}^{\xi}(t) \right)
\end{equation*}
uniformly for $ t \in \mathbb{R} $. Thus, it suffices to consider the $ \dot{\rm H}^1(\mathbb{R}^5) $-norm.
For this, we will make use of the following identity:
\begin{equation*}
\int_{\mathbb{R}^5} \chi_{R}^2 \vert \nabla u^{\xi} \vert^2 {\rm d}x = \int_{\mathbb{R}^5} \vert \nabla ( \chi_{R} u^{\xi} ) \vert^2 + \chi_{R} \Delta (\chi_{R}) \vert u^{\xi} \vert^2 {\rm d}x,
\end{equation*}
which can be obtained by a direct computation. In particular,
\begin{equation*}
H(\b{\rm u}^{\xi}_R) \le H(\b{\rm u}^{\xi}) + \mathcal{O}\left( \frac{1}{R^2} M(\b{\rm u}) \right).
\end{equation*}
Choosing $ R \gg 1 $ sufficiently large depending on $ \delta,  M(\b{\rm u}) $ and $ \b{\rm Q} $, the result follows.
\end{proof}

\section{Proof of scattering criterion}
In this section, we will follow the strategy in \cite{Dodson2018} to prove the scattering criterion (Proposition \ref{scat}).
Roughly speaking, it states that if in any large window of time there always exists an interval large enough on which the scattering norm is very small,
then the solution of \eqref{NLS system} has to scatter.
\begin{proposition}[Scattering criterion]
Let $ \b{\rm u}_0, \b{\rm Q}, I $ be as in Theorem \ref{decay}, and suppose further that $ \Vert \b{\rm u}_0 \Vert_{{\rm H}^1(\mathbb{R}^5)} \lesssim E_0 $.
Let $ \b{\rm u} : \mathbb{R} \times \mathbb{R}^5 \to \mathbb{C} $ be the corresponding global solution to \eqref{NLS system}.
Suppose that $ \exists ~ t_0 \in I $ such that
\begin{equation}
\Vert \b{\rm u} \Vert_{{\rm L}_{t}^6([t_0-l, t_0], {\rm L}_{x}^3(\mathbb{R}^5))} \le \eps^{\frac{1}{18}},
\label{b1}
\end{equation}
where $ \eps = \eps(E_0) $ is sufficiently small and $ T_0 = T_0(\eps, E_0) $ is large enough.
Then $ \b{\rm u} $ scatters forward in time.
\end{proposition}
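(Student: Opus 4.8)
The plan is to run the Dodson--Murphy--Tao scheme: reduce forward scattering to a smallness estimate for the \emph{future} free evolution of $\b{\rm u}(t_0)$ in the scaling-critical Strichartz space ${\rm L}^6_t{\rm L}^3_x$ (note $s_c=\tfrac{d}{2}-2=\tfrac12$ for $d=5$, so $(6,3)\in\Lambda_{1/2}$ in the sense of \eqref{Lambda}), and then extract that smallness from the hypothesis on the long window $[t_0-l,t_0]$, $l=\eps^{-4/5}$.

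\emph{Steps 1--2 (reduction and splitting off the window).} The $\dot H^{1/2}$-critical Cauchy/stability theory for \eqref{NLS system} -- built from the Strichartz estimates of Theorem~\ref{strichartz}, H\"older, and the pure quadratic estimate $\big\Vert i\int_{t_0}^{t}\mathcal{S}(t-s)\b{\rm f}(s)\,{\rm d}s\big\Vert_{{\rm L}^6_t{\rm L}^3_x}\lesssim\Vert\b{\rm u}\Vert_{{\rm L}^6_t{\rm L}^3_x}^2$ (the nonlinearity puts both factors in ${\rm L}^3_x$, so $\b{\rm f}$ lands in the dual space ${\rm L}^3_t{\rm L}^{3/2}_x$) -- gives: there is a small absolute $\eta$ such that if $\Vert\mathcal{S}(t-t_0)\b{\rm u}(t_0)\Vert_{{\rm L}^6_t{\rm L}^3_x([t_0,\infty))}\le\eta$, then a continuity/bootstrap argument forces $\Vert\b{\rm u}\Vert_{{\rm L}^6_t{\rm L}^3_x([t_0,\infty))}<\infty$, and together with the uniform $H^1$-bound of Lemma~\ref{coer1} and persistence of regularity this yields forward scattering in $H^1$ (with $\b{\rm u}_+=\b{\rm u}(t_0)+i\int_{t_0}^{\infty}\mathcal{S}(t_0-s)\b{\rm f}(s)\,{\rm d}s$). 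So it suffices to prove that smallness. Writing Duhamel from $t_0$ back to $t_0-l$, $\mathcal{S}(t-t_0)\b{\rm u}(t_0)=\mathcal{S}(t-(t_0-l))\b{\rm u}(t_0-l)+i\int_{t_0-l}^{t_0}\mathcal{S}(t-s)\b{\rm f}(s)\,{\rm d}s$, inhomogeneous Strichartz plus the quadratic estimate bound the second term on all of $\mathbb{R}$ by $\lesssim\Vert\b{\rm u}\Vert_{{\rm L}^6_t{\rm L}^3_x([t_0-l,t_0])}^2\le\eps^{1/9}$; the same computation shows $\b{\rm u}$ and $\mathcal{S}(\cdot-(t_0-l))\b{\rm u}(t_0-l)$ differ by $\mathcal{O}(\eps^{1/9})$ in ${\rm L}^6_t{\rm L}^3_x$ on the window, hence $\Vert\mathcal{S}(\tau)g\Vert_{{\rm L}^6_\tau{\rm L}^3_x([0,l])}\lesssim\eps^{1/18}$, where $g:=\b{\rm u}(t_0-l)$, $\Vert g\Vert_{H^1}\lesssim E_0$.

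\emph{Step 3 (the free tail -- the crux).} It remains to bound $\Vert\mathcal{S}(\tau)g\Vert_{{\rm L}^6_\tau{\rm L}^3_x([l,\infty))}$. Decompose $g=P_{>N}g+P_{\le N}g$ into high and low frequencies. The high-frequency part needs only Strichartz: $\Vert\mathcal{S}(\tau)P_{>N}g\Vert_{{\rm L}^6_\tau{\rm L}^3_x(\mathbb{R})}\lesssim\Vert P_{>N}g\Vert_{\dot H^{1/2}}\lesssim N^{-1/2}E_0<\eps$ once $N=N(\eps,E_0)$ is large. The low-frequency part genuinely uses dispersion: by \eqref{disp} and Bernstein (to absorb the frequency cutoff), $\Vert\mathcal{S}(\tau)P_{\le N}g\Vert_{{\rm L}^3_x}$ behaves, for $\tau\gtrsim l$, like $\tau^{-5/6}$ times a factor slowly varying in $\tau$; comparing the ${\rm L}^6_\tau$-mass over $[l,\infty)$ with that over a dyadic block inside $[0,l]$ -- which is $\lesssim\eps^{1/18}$ by Step~2 -- and summing the Littlewood--Paley truncation errors, one obtains a bound $\lesssim\eps^{1/18}$; here $l=\eps^{-4/5}$ (and, in the regime the criterion is invoked, $t_0$ large, so that also $\Vert\mathcal{S}(t)\b{\rm u}_0\Vert_{{\rm L}^6_t{\rm L}^3_x([t_0,\infty))}<\eps$) is tuned exactly to balance these errors. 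Adding the pieces, $\Vert\mathcal{S}(t-t_0)\b{\rm u}(t_0)\Vert_{{\rm L}^6_t{\rm L}^3_x([t_0,\infty))}\lesssim\eps^{1/18}\le\eta$ for $\eps=\eps(E_0)$ small, closing Step~1.

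\emph{Main obstacle.} Steps~1--2 are routine perturbation theory; the content is Step~3. A Strichartz estimate controls only the full-line norm of a free solution and is blind to the distinction between a long window and the tail beyond it, so one is forced onto the pointwise dispersive bound \eqref{disp} -- which however requires an ${\rm L}^{3/2}_x$-type control of the data that is unavailable at the $H^1$ level. It is precisely the Littlewood--Paley truncation, the time cutoff at $\tau\sim l$, and the transfer of smallness from $[0,l]$ to $[l,\infty)$ that get around this, and the numerology ($\eps^{1/18}$, $l=\eps^{-4/5}$) records the trade-off between the dispersive gain over times $\ge l$ and the truncation losses.
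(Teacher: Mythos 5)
Your Steps 1 and 2 are essentially sound: the Kato-type non-admissible estimate $\bigl\Vert \int_{t_0}^{t}\mathcal{S}(t-s)\b{\rm f}(s)\,{\rm d}s\bigr\Vert_{{\rm L}^6_t{\rm L}^3_x}\lesssim \Vert \b{\rm f}\Vert_{{\rm L}^3_t{\rm L}^{3/2}_x}\lesssim\Vert\b{\rm u}\Vert^2_{{\rm L}^6_t{\rm L}^3_x}$ is legitimate (dispersive bound \eqref{disp} plus Hardy--Littlewood--Sobolev in time, since $5/6<1$), and with it you correctly reduce the problem to showing that the free evolution of $g=\b{\rm u}(t_0-l)$ is small in ${\rm L}^6_t{\rm L}^3_x$ on the future, knowing it is small on a window of length $l$ and that $\Vert g\Vert_{{\rm H}^1}\lesssim E_0$.

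Step 3, however, is where the content lies, and as described it fails. The implicit claim -- that smallness of $\Vert\mathcal{S}(\tau)g\Vert_{{\rm L}^6_\tau{\rm L}^3_x([0,l])}$ together with an ${\rm H}^1$ bound forces smallness on $[l,\infty)$ -- is false uniformly in $g$: taking $g=\mathcal{S}(-l-A)\b{\rm h}$ with $\b{\rm h}$ a fixed Schwartz pair and $A$ large gives data of bounded ${\rm H}^1$ norm whose free evolution is arbitrarily small on $[0,l]$ but of fixed size on $[l,\infty)$; since $g=\b{\rm u}(t_0-l)$ varies with $t_0$ and $l$, no fixed-function tail argument or ``transfer of ${\rm L}^6_\tau$-mass from a dyadic block in $[0,l]$'' is available. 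Moreover the low-frequency dispersive bound you invoke needs $\Vert P_{\le N}g\Vert_{{\rm L}^{3/2}_x}$, which Bernstein cannot supply from ${\rm L}^2$ (it goes the wrong way); you name this obstruction but never actually circumvent it. The parenthetical extra input (smallness of $\mathcal{S}(t)\b{\rm u}_0$ in the future) is indeed the missing ingredient, but to exploit it you must go back to time $0$ rather than to $t_0-l$: write $\mathcal{S}(t-t_0)\b{\rm u}(t_0)=\mathcal{S}(t)\b{\rm u}_0+i\int_0^{t_0}\mathcal{S}(t-s)\b{\rm f}(s)\,{\rm d}s$, split the integral at $t_0-l$, control the recent piece by Strichartz/Sobolev and \eqref{b1}, and control the far-past piece by dispersion, where the gain is automatic because $|t-s|\ge l$ for $t>t_0$, $s<t_0-l$: interpolating between a dispersively estimated norm (e.g. ${\rm L}^{18}_t{\rm L}^{18/5}_x$, with $\Vert\b{\rm f}(s)\Vert_{{\rm L}^{18/13}_x}$ bounded via the uniform ${\rm H}^1$ bound) and a mass-level Strichartz norm (${\rm L}^2_t{\rm L}^{10/3}_x$) produces the factor $l^{-c}=\eps^{c'}$. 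This is precisely the paper's Step 2; and the smallness of $\mathcal{S}(t)\b{\rm u}_0$ is not obtained by taking $t_0$ large (it need not hold on all of $[t_0,\infty)$) but by decomposing $\mathbb{R}$ into finitely many intervals on which $\Vert\mathcal{S}(t)\b{\rm u}_0\Vert_{{\rm L}^6_t{\rm L}^3_x}$ is small and placing $t_0$ near the left edge of each long interval, which is the paper's Step 1. In short, the dispersive decay must be applied to the far-past Duhamel integral, where the time separation $\ge l$ is built in, not to the free evolution of $\b{\rm u}(t_0-l)$ itself; without that reorganization your Step 3 cannot be completed.
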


\begin{proof}
The entire proof process is divided into two major steps.

{\bf Step 1.} A standard argument yields scattering if for $ T_0 $ large enough
\begin{equation}
\Vert \b{\rm u}(t, x) \Vert^{6}_{{\rm L}_{t}^6(\mathbb{R}, {\rm L}_{x}^3(\mathbb{R}^5))} \lesssim_{E_0} T_0.
\label{0}
\end{equation}
We begin by splitting $ \mathbb{R} $ into $ J = J(\epsilon, E_0) $ intervals $ I_{j} $ such that
\begin{equation}
\left\Vert \mathcal{S}(t) \b{\rm u}_0 \right\Vert^{6}_{{\rm L}_{t}^6(I_{j}, {\rm L}_{x}^3(\mathbb{R}^5))} \le \eps^{\frac{1}{24}}.
\label{linear}
\end{equation}
For those $ I_{j} $ with $ \vert I_{j} \vert \le 2 T_0 $,
\begin{equation}
\Vert \b{\rm u}(t, x) \Vert^{6}_{{\rm L}_{t}^6(\cup I_{j}, {\rm L}_{x}^3(\mathbb{R}^5))} \lesssim_{E_0} \sum \langle I_{j} \rangle \lesssim T_0.
\label{Ij}
\end{equation}
So, we only need to consider $ j $ such that $ \vert I_{j} \vert > 2 T_0 $.
Therefore we fix some $ I_{j} = (a_{j}, b_{j}) $ and choose $ t_0 \in (a_{j}, a_{j} + T_0) $.

Note that $ I_{j} = (a_{j}, t_0] \cup (t_0, b_{j}) $ and $ t_0 - a_{j} < T_0 $, similarly to \eqref{Ij} we have
\begin{equation*}
\begin{aligned}
\left\Vert \b{\rm u} \right\Vert^6_{{\rm L}_{t}^6(I_{j}, {\rm L}_{x}^3(\mathbb{R}^5))} = & \left\Vert \b{\rm u} \right\Vert^6_{{\rm L}_{t}^6((a_{j}, t_0], {\rm L}_{x}^3(\mathbb{R}^5))} + \left\Vert \b{\rm u} \right\Vert^6_{{\rm L}_{t}^6((t_0, b_{j}), {\rm L}_{x}^3(\mathbb{R}^5))} \\
\lesssim & ~ T_0 + \left\Vert \b{\rm u} \right\Vert^6_{{\rm L}_{t}^6((t_0, b_{j}), {\rm L}_{x}^3(\mathbb{R}^5))}.
\end{aligned}
\end{equation*}
We use Strichartz estimate to find
\begin{equation*}
\begin{aligned}
& \left\Vert \b{\rm u}(t, x) \right\Vert_{{\rm L}_{t}^6((t_0, b_{j}), {\rm L}_{x}^3(\mathbb{R}^5))} \\
\le & \left\Vert \mathcal{S}(t-t_0) \b{\rm u}(t_0) \right\Vert_{{\rm L}_{t}^6((t_0, b_{j}), {\rm L}_{x}^3(\mathbb{R}^5))} + \left\Vert \int_{t_0}^{t} \mathcal{S}(t-s) \b{\rm f}(s) {\rm d}s \right\Vert_{{\rm L}_{t}^6((t_0, b_{j}), {\rm L}_{x}^3(\mathbb{R}^5))} \\
\le & \left\Vert \mathcal{S}(t-t_0) \b{\rm u}(t_0) \right\Vert_{{\rm L}_{t}^6((t_0, b_{j}), {\rm L}_{x}^3(\mathbb{R}^5))} + \left\Vert \b{\rm f} \right\Vert_{{\rm L}_{t}^{\frac{6}{5}}((t_0, b_{j}), {\rm L}_{x}^{\frac{3}{2}}(\mathbb{R}^5))} \\
\le & \left\Vert \mathcal{S}(t-t_0) \b{\rm u}(t_0) \right\Vert_{{\rm L}_{t}^6((t_0, b_{j}), {\rm L}_{x}^3(\mathbb{R}^5))} + \left\Vert \b{\rm u} \right\Vert^2_{{\rm L}_{t}^{\frac{12}{5}}((t_0, b_{j}), {\rm L}_{x}^{3}(\mathbb{R}^5))} \\
\le & \left\Vert \mathcal{S}(t-t_0) \b{\rm u}(t_0) \right\Vert_{{\rm L}_{t}^6((t_0, b_{j}), {\rm L}_{x}^3(\mathbb{R}^5))} + T_0^{\frac{1}{2}} \left\Vert \b{\rm u} \right\Vert^2_{{\rm L}_{t}^{6}((t_0, b_{j}), {\rm L}_{x}^{3}(\mathbb{R}^5))}.
\end{aligned}
\end{equation*}
The continuity argument tells us if $ \left\Vert \mathcal{S}(t-t_0) \b{\rm u}(t_0) \right\Vert_{{\rm L}_{t}^6((t_0, b_{j}), {\rm L}_{x}^3(\mathbb{R}^5))} \le \eps^{\frac{1}{18}} $ then \eqref{0} holds.

Now, we turn to the following identity
\begin{equation*}
\mathcal{S}(t-t_0) \b{\rm u}(t_0) = \mathcal{S}(t) \b{\rm u}_0 + i \int_{0}^{t_0} \mathcal{S}(t-s) \b{\rm f}(s) {\rm d}s.
\end{equation*}
Combining this with \eqref{linear} then suffices to establish
\begin{equation}
\left\Vert \int_{0}^{t_0} {\mathcal S}(t-s)\b{\rm f}(s) {\rm d}s \right\Vert_{{\rm L}_{t}^6((t_0, b_{j}), {\rm L}_{x}^3(\mathbb{R}^5))} \lesssim_{E_0} \eps^{\frac{1}{18}}.
\end{equation}

{\bf Step 2.} To show a stronger fact that \eqref{b1} implies
\begin{equation}
\left\Vert \int_{0}^{t_0} {\mathcal S}(t-s)\b{\rm f}(s) {\rm d}s \right\Vert_{{\rm L}_{t}^6((t_0, \infty), {\rm L}_{x}^3(\mathbb{R}^5))} \lesssim_{E_0} \eps^{\frac{1}{18}}.
\label{b2}
\end{equation}
we do as follows,
\begin{equation*}
\begin{aligned}
\int_{0}^{t_0} {\mathcal S}(t-s)\b{\rm f}(s) {\rm d}s
= & \int_{0}^{t_0-l} {\mathcal S}(t-s)\b{\rm f}(s) {\rm d}s + \int_{t_0-l}^{t_0} \mathcal{S}(t-s)\b{\rm f}(s) {\rm d}s \\
= & : I + II.
\end{aligned}
\end{equation*}

On one hand, we transform the identity
\begin{equation*}
\b{\rm u}(t_0-l) = \mathcal{S}(t_0-l)\b{\rm u}_0 + i \int_{0}^{t_0-l} \mathcal{S}(t_0-l-s)\b{\rm f}(s) {\rm d}s
\end{equation*}
into
\begin{equation*}
i \int_{0}^{t_0-l} \mathcal{S}(t-s)\b{\rm f}(s) {\rm d}s = \mathcal{S}(t-t_0+l)\b{\rm u}(t_0-l) - \mathcal{S}(t)\b{\rm u}_0
\end{equation*}
and use dispersive estimates of Schr\"odinger group to deduce
\begin{equation}
\begin{aligned}
\left\Vert I \right\Vert_{{\rm L}_{t}^6((t_0, \infty), {\rm L}_{x}^3(\mathbb{R}^5))} & \le \left\Vert I \right\Vert^{\frac{3}{4}}_{{\rm L}_{t}^{18}((t_0, \infty), {\rm L}_{x}^{\frac{18}{5}}(\mathbb{R}^5))} \cdot \left\Vert I \right\Vert^{\frac{1}{4}}_{{\rm L}_{t}^2((t_0, \infty), {\rm L}_{x}^{\frac{10}{3}}(\mathbb{R}^5))} \\
& \le \left\Vert \int_{0}^{t_0-l} \left\Vert {\mathcal S}(t-s)\b{\rm f}(s) \right\Vert_{{\rm L}_{x}^{\frac{18}{5}}(\mathbb{R}^5)} {\rm d}s \right\Vert^{\frac{3}{4}}_{{\rm L}_{t}^{18}(t_0, \infty)} \\
& \quad \cdot \left( \Vert \b{\rm u}(t_0-l) \Vert_{{\rm L}_{x}^{2}(\mathbb{R}^5)} + \Vert \b{\rm u}_0 \Vert_{{\rm L}_{x}^{2}(\mathbb{R}^5)} \right)^{\frac{1}{4}} \\
& \lesssim_{E_0} \left\Vert \int_{0}^{t_0-l} \vert t-s \vert^{-\frac{10}{9}} \Vert \b{\rm f}(s) \Vert_{{\rm L}_{x}^{\frac{18}{13}}(\mathbb{R}^5)} {\rm d}s \right\Vert^{\frac{3}{4}}_{{\rm L}_{t}^{18}(t_0, \infty)} \\
& \lesssim_{E_0} \left\Vert \vert t-t_0+l \vert^{-\frac{1}{9}} \right\Vert^{\frac{3}{4}}_{{\rm L}_{t}^{18}(t_0, \infty)} \cdot \Vert \b{\rm u} \Vert^{\frac{3}{4}}_{{\rm L}_{t}^{\infty}([0, t_0-l], {\rm L}_{x}^{\frac{9}{2}}(\mathbb{R}^5))} \\
& \quad \cdot \Vert \b{\rm u} \Vert^{\frac{3}{4}}_{{\rm L}_{t}^{\infty}([0, t_0-l], {\rm L}_{x}^{2}(\mathbb{R}^5))} \\
& \lesssim_{E_0} ~ l^{-\frac{1}{24}} = \eps^{\frac{1}{30}}, \quad\quad\quad\quad \text{for} ~ l = \eps^{-\frac{4}{5}}.
\end{aligned}
\label{nonlinear1}
\end{equation}

On the other hand, by Sobolev embedding and Strichartz estimate \eqref{Strichartz},
\begin{equation}
\begin{aligned}
\left\Vert II \right\Vert_{{\rm L}_{t}^6((t_0, \infty), {\rm L}_{x}^3(\mathbb{R}^5))} \lesssim & \left\Vert \vert \nabla \vert^{\frac{1}{2}} \b{\rm f} \right\Vert_{{\rm L}_{t}^{\frac{3}{2}}((t_0-l, t_0), {\rm L}_{x}^{\frac{30}{19}}(\mathbb{R}^5))} \\
\lesssim & ~ \Vert \b{\rm u} \Vert_{{\rm L}_{t}^6((t_0-l, t_0), {\rm L}_{x}^3(\mathbb{R}^5))} \left\Vert \vert \nabla \vert^{\frac{1}{2}} \b{\rm u} \right\Vert_{{\rm L}_{t}^2((t_0-l, t_0), {\rm L}_{x}^{\frac{10}{3}}(\mathbb{R}^5))} \\
\lesssim & ~ \Vert \b{\rm u} \Vert_{{\rm L}_{t}^6((t_0-l, t_0), {\rm L}_{x}^3(\mathbb{R}^5))} \le \eps^{\frac{1}{18}},
\end{aligned}
\label{nonlinear2}
\end{equation}
where we have used
\begin{equation*}
\left\Vert \vert \nabla \vert^{\frac{1}{2}} \b{\rm u} \right\Vert_{{\rm L}_{t}^2((t_0-l, t_0), {\rm L}_{x}^{\frac{10}{3}}(\mathbb{R}^5))} \lesssim 1.
\end{equation*}

\eqref{nonlinear1} and \eqref{nonlinear2} suggest that \eqref{b2} is true, which complete the proof.
\end{proof}

\section{Interaction Morawetz estimate}
We are now in the position to prove the interaction Morawetz estimate \eqref{Me} holds.
As we all know, the decay estimate of the solution $ \b{\rm u} $ can be characterized by Morawetz estimate.

Firstly, we define a functional of $ \b{\rm u}(t, x) = (u(t, x), v(t, x)) $, the solution of \eqref{NLS system}.
\begin{equation}
\begin{aligned}
M(t) = & 2 \int_{\mathbb{R}^5} \int_{\mathbb{R}^5} \Im \left( 2 \overline{u(x)} \nabla u(x) + \overline{v(x)} \nabla v(x) \right) \cdot \nabla a(x-y) N_{\kappa} {\rm d}x {\rm d}y,
\end{aligned}
\label{M}
\end{equation}
where $ N_{\kappa} = 2\kappa \vert u(y) \vert^2 + \vert v(y) \vert^2 $ and $ a \in C^{\infty} $ is a real function to be chosen later.

\begin{remark}
Compared with the classical \eqref{NLS} case in \cite{Dodson2018}, the coefficients in the definition expression of \eqref{M} are chosen carefully here.
For computing $ \frac{\rm d}{{\rm d}t} M(t) $, we need to use the equation \eqref{NLS system} to change the derivative of $ \b{\rm u} $ versus $ t $
into the derivative of $ \b{\rm u} $ versus $ x $ and the nonlinear term $ \b{\rm f} $.
And the chain rule of derivatives produces many terms.
That the ratio of the two coefficients of $ \overline{u(x)} \nabla u(x) $ and $ \overline{v(x)} \nabla v(x) $ is $ 2 : 1 $
is useful to get $ \Re \left( \overline{v(x)} u^2(x) \right) $ in \eqref{M1}.
In fact, $ \Re \left( \overline{v(x)} u^2(x) \right) $ is the final result after the positive and negative offsets corresponding to the nonlinear term $ \b{\rm f} $.
Besides, the exact ratio $ 2\kappa : 1 $ from the two coefficients of $ N_{\kappa} $ in \eqref{M} is used in the Cauchy-Schwartz inequality $ \eqref{CS} $,
which plays a vital role in estimating $ \mathcal{D+F} \ge 0 $.
\end{remark}

Let $ R \gg 1 $ be sufficiently large and let $ \phi $ and $ \phi_1 $ both be radial satisfying
\begin{equation*}
\phi(x) = \frac{1}{\omega_{5}R^{5}} \int_{\mathbb{R}^5} \Gamma^2 \left(\frac{x - s}{R}\right) \Gamma^2 \left(\frac{s}{R}\right) {\rm d}s,
\end{equation*}
and
\begin{equation*}
\phi_1(x) = \frac{1}{\omega_{5}R^{5}} \int_{\mathbb{R}^5} \Gamma^3 \left(\frac{x - s}{R}\right) \Gamma^2 \left(\frac{s}{R}\right) {\rm d}s,
\end{equation*}
where $ \omega_{5} $ is the volume of unit ball in $ \mathbb{R}^5 $ and $ \Gamma $ be as in \eqref{Gamma}.
Finally, we define
\begin{equation*}
\psi(x) = \frac{1}{\vert x \vert} \int_{0}^{\vert x \vert} \phi(r) {\rm d}r, \quad a(x) = \int_{0}^{\vert x \vert} \psi(r) r {\rm d}r.
\end{equation*}

\textbf{The proof of Proposition \ref{decay}:}
We rely on the equation \eqref{NLS system} to change $ \b{\rm u}_t $ equally into the derivative of $ \b{\rm u} $ with respect to the space variable $ x $.
Then we have
\begin{equation}
\begin{aligned}
\frac{\mathrm{d}}{\mathrm{d}t}M(t)
= & - \int_{\mathbb{R}^5} \int_{\mathbb{R}^5} \Delta W_{\kappa} \Delta a(x-y) N_{\kappa} {\rm d}x {\rm d}y \\
& -2 \int_{\mathbb{R}^5} \int_{\mathbb{R}^5} \Re \left( \overline{v(x)} u^2(x) \right) \Delta a(x-y) N_{\kappa} {\rm d}x {\rm d}y \\
& +4 \sum_{k=1}^{5} \sum_{j=1}^{5} \int_{\mathbb{R}^5} \int_{\mathbb{R}^5} R^{jk}_{\kappa} a_{jk}(x-y) N_{\kappa} {\rm d}x {\rm d}y \\
& -4 \sum_{k=1}^{5} \sum_{j=1}^{5} \int_{\mathbb{R}^5} \int_{\mathbb{R}^5} A^{j} a_{jk}(x-y) B^{k}_{\kappa} {\rm d}x {\rm d}y,
\end{aligned}
\label{M1}
\end{equation}
where $ W_{\kappa} = 2 \vert u(x) \vert^2 + \kappa \vert v(x) \vert^2, R^{jk}_{\kappa} = \Re \left( 2 u_{j}(x) \overline{u_{k}(x)} + \kappa v_{j}(x) \overline{v_{k}(x)} \right) $,
and $ A^{j} = \Im \left( 2 u(x) \overline{u_{j}(x)} + v(x) \overline{v_{j}(x)} \right), B^{k}_{\kappa} = \Im \left( 2\kappa u(y) \overline{u_{k}(y)} + \kappa v(y) \overline{v_{k}(y)} \right) $
if we use $ \partial_l $ to denote the partial differential respected to $ x_l $ for $ l \in \{ 1, 2, 3, 4, 5 \} $.

Direct computations yield $ \Delta a = 4 \psi + \phi $ and $ a_{jk} = \delta_{jk} \phi + P_{jk} (\psi - \phi) $,
where $ P_{jk}(x) := \delta_{jk} - \frac{x_{j}x_{k}}{\vert x \vert^2} $ and $ \psi - \phi \ge 0 $.
Due to the facts above, we have
\begin{equation}
\begin{aligned}
\frac{\rm d}{{\rm d}t}M(t)
= & \int_{\mathbb{R}^5} \int_{\mathbb{R}^5} \Delta W_{\kappa} \left( 4 \psi(x-y) + \phi(x-y) \right) N_{\kappa} {\rm d}x {\rm d}y \\
& + 2 \int_{\mathbb{R}^5} \int_{\mathbb{R}^5} \Re \left( \overline{v(x)} u^2(x) \right) \left( 4 \psi(x-y) + \phi(x-y) \right) N_{\kappa} {\rm d}x {\rm d}y \\
& + 4 \int_{\mathbb{R}^5} \int_{\mathbb{R}^5} L_{\kappa} \phi(x-y) N_{\kappa} {\rm d}x {\rm d}y \\
& + 4 \sum_{k=1}^{5} \sum_{j=1}^{5} \int_{\mathbb{R}^5} \int_{\mathbb{R}^5} R^{jk}_{\kappa} P_{jk}(x-y) (\psi(x-y) - \phi(x-y)) N_{\kappa} {\rm d}x {\rm d}y \\
& - 4 \int_{\mathbb{R}^5} \int_{\mathbb{R}^5} A \phi(x-y) B_{\kappa} {\rm d}x {\rm d}y \\
& - 4 \sum_{k=1}^{5} \sum_{j=1}^{5} \int_{\mathbb{R}^5} \int_{\mathbb{R}^5} A^{j} P_{jk}(x-y) (\psi(x-y) - \phi(x-y)) B^{k}_{\kappa} {\rm d}x {\rm d}y \\
=: & \mathcal{A+B+C+D+E+F},
\end{aligned}
\end{equation}
where $ A = \Im \left( 2 u(x) \overline{\nabla u(x)} + v(x) \overline{\nabla v(x)} \right), B_{\kappa} = \Im \left( 2\kappa u(y) \overline{\nabla u(y)} + \kappa v(y) \overline{\nabla v(y)} \right) $
and $ L_{\kappa} = 2 \vert \nabla u(x) \vert^2 + \kappa \vert \nabla v(x) \vert^2 $.

$ \mathcal{A} $ remains itself unchanged because it will be treated as an error term below.

As for $ \mathcal{B} $, we make use of the decomposition identity $ 4 \psi + \phi = 5 \phi_{1} + 4 (\psi - \phi) + 5 (\phi - \phi_{1}) $ and deduce
\begin{equation}
\begin{aligned}
\mathcal{B} = & -\frac{10}{\omega_{5}R^5} \int_{\mathbb{R}^5} \int_{\mathbb{R}^5} \int_{\mathbb{R}^5} \Re \left( \overline{v(x)} u^2(x) \right) \Gamma^{3} \left( \frac{x - s}{R} \right) \Gamma^{2} \left( \frac{y - s}{R} \right) N_{\kappa} {\rm d}x {\rm d}y {\rm d}s \\
& -8 \int_{\mathbb{R}^5} \int_{\mathbb{R}^5} \Re \left( \overline{v(x)} u^2(x) \right) \left( \psi(x-y) - \phi(x-y) \right) N_{\kappa} {\rm d}x {\rm d}y \\
& -10 \int_{\mathbb{R}^5} \int_{\mathbb{R}^5} \Re \left( \overline{v(x)} u^2(x) \right) \left( \phi(x-y) - \phi_{1}(x, y) \right) N_{\kappa} {\rm d}x {\rm d}y.
\end{aligned}
\end{equation}

We claim the quantity of $ \mathcal{C + E} $ is Galilean invariant, that is,
invariant under the the transformation
\begin{equation*}
\b{\rm u}(t, x) \mapsto \b{\rm u^{\xi}} = ( u^{\xi}(t, x), v^{\xi}(t, x) ) := ( e^{\kappa ix\cdot\xi}u, e^{ix\cdot\xi}v ), \quad\quad \forall \b{\rm u} = (u, v),
\end{equation*}
for any $ \xi = \xi(t, s, R) $.
In fact, we can compute
\begin{equation*}
\begin{aligned}
& 2 \vert \nabla u^{\xi}(x) \vert^2 + \kappa \vert \nabla v^{\xi}(x) \vert^2 \\
= & \left[ 2 \vert \nabla u(x) \vert^2 + \kappa \vert \nabla v(x) \vert^2 \right] - \left[ 4\kappa\xi\cdot\Im \left( u(x) \overline{\nabla u(x)} \right) + 2\kappa\xi\cdot\Im \left( v(x) \overline{\nabla v(x)} \right) \right] \\
& \quad + \left[ \kappa^2\vert \xi \vert^2 \vert u(x) \vert^2 + \vert \xi \vert^2 \vert v(x) \vert^2 \right], \\
& 2\kappa \vert u^{\xi}(y) \vert^2 + \vert v^{\xi}(y) \vert^2 = 2\kappa \vert u(y) \vert^2 + \vert v(y) \vert^2, \\
\text{and}& \\
& \Im \left( 2 u^{\xi}(x) \overline{\nabla u^{\xi}(x)} + v^{\xi}(x) \overline{\nabla v^{\xi}(x)} \right) \\
= & \Im \left( 2 u(x) \overline{\nabla u(x)} + v(x) \overline{\nabla v(x)} \right) - \left( 2\kappa\xi \vert u(x) \vert^2 + \kappa\xi \vert v(x) \vert^2 \right).
\end{aligned}
\end{equation*}
Thus,
\begin{equation*}
\begin{aligned}
& \left( 2 \vert \nabla u^{\xi}(x) \vert^2 + \kappa \vert \nabla v^{\xi}(x) \vert^2 \right) \left( 2\kappa \vert u^{\xi}(y) \vert^2 + \vert v^{\xi}(y) \vert^2 \right) \\
& \quad - \Im \left( 2 u^{\xi}(x) \overline{\nabla u^{\xi}(x)} + v^{\xi}(x) \overline{\nabla v^{\xi}(x)} \right) \Im \left( 2\kappa u^{\xi}(y) \overline{\nabla u^{\xi}(y)} + \kappa v^{\xi}(y) \overline{\nabla v^{\xi}(y)} \right) \\
= & \left( 2 \vert \nabla u(x) \vert^2 + \kappa \vert \nabla v(x) \vert^2 \right) \left( 2\kappa \vert u(y) \vert^2 + \vert v(y) \vert^2 \right) \\
& \quad - \Im \left( 2 u(x) \overline{\nabla u(x)} + v(x) \overline{\nabla v(x)} \right) \Im \left( 2\kappa u(y) \overline{\nabla u(y)} + \kappa v(y) \overline{\nabla v(y)} \right) \\
& \quad - \left( 2\kappa\xi \cdot \Im \left( u(x) \overline{\nabla u(x)} \right) + \kappa\xi \cdot \Im \left(  v(x) \overline{\nabla v(x)} \right) \right) \left( 2\kappa \vert u(y) \vert^2 + \vert v(y) \vert^2 \right) \\
& \quad + \left( 2\kappa\xi \vert u(x) \vert^2 + \xi \vert v(x) \vert^2 \right) \cdot \Im \left( 2\kappa u^{\xi}(y) \overline{\nabla u^{\xi}(y)} + \kappa v^{\xi}(y) \overline{\nabla v^{\xi}(y)} \right),
\end{aligned}
\end{equation*}
and hence the claim follows by symmetry of $ \Gamma^{2} $ and a change of variables.

The choice of $ \xi = \xi(t, s, R) $ in \eqref{xi} results in
\begin{equation*}
\int_{\mathbb{R}^5} \Im \left( 2 u(x) \overline{\nabla u(x)} + v(x) \overline{\nabla v(x)} \right) \Gamma^{2} \left(\frac{x-s}{R}\right) {\rm d}x = 0.
\end{equation*}
As a result,
\begin{equation}
\mathcal{C + E} = \frac{4}{\omega_{5}R^5} \int_{\mathbb{R}^5} \int_{\mathbb{R}^5} \int_{\mathbb{R}^5} L^{\xi}_{\kappa} \Gamma^{2} \left( \frac{x - s}{R} \right) \Gamma^{2} \left( \frac{y - s}{R} \right) \left( 2\kappa \vert u(y) \vert^2 + \vert v(y) \vert^2 \right) {\rm d}x {\rm d}y {\rm d}s,
\label{C+E}
\end{equation}
where $ L^{\xi}_{\kappa} = \left( 2 \vert \nabla u^{\xi}(x) \vert^2 + \kappa \vert \nabla v^{\xi}(x) \vert^2 \right) $.

Note that, by Cauchy-Schwartz inequality,
\begin{equation}
\Im \left( 2\kappa u \overline{\not\!\nabla u} + \kappa v \overline{\not\!\nabla v} \right) \le \sqrt{2\kappa \vert \not\!\nabla u \vert^2 + \kappa^2 \vert \not\!\nabla v \vert^2} \sqrt{2\kappa \vert u \vert^2 + \vert v \vert^2},
\label{CS}
\end{equation}
we have
\begin{equation}
\Im \left( 2 u \overline{\not\!\nabla u} + v \overline{\not\!\nabla v} \right) \Im \left( 2\kappa u \overline{\not\!\nabla u} + \kappa v \overline{\not\!\nabla v} \right) \le \left( 2 \vert \not\!\nabla u \vert^2 + \kappa \vert \not\!\nabla v \vert^2 \right) \left( 2\kappa \vert u \vert^2 + \vert v \vert^2 \right),
\label{D+F}
\end{equation}
which means $ \mathcal{D + F} \ge 0 $.

To conclude, we deduce
\begin{equation}
\begin{aligned}
\frac{\mathrm{d}}{\mathrm{d}t}M(t)
\ge & \int_{\mathbb{R}^5} \int_{\mathbb{R}^5} \Delta W_{\kappa} \left( 4 \psi(x-y) + \phi(x-y) \right) \left( \vert u(y) \vert^2 + \vert v(y) \vert^2 \right) {\rm d}x {\rm d}y \\
& -\frac{10}{\omega_{5}R^5} \int_{\mathbb{R}^5} \int_{\mathbb{R}^5} \int_{\mathbb{R}^5} \Re \left( \overline{v(x)} u^2(x) \right) \Gamma^{3} \left( \frac{x - s}{R} \right) \Gamma^{2} \left( \frac{y - s}{R} \right) N_{\kappa} {\rm d}x {\rm d}y {\rm d}s \\
& -8 \int_{\mathbb{R}^5} \int_{\mathbb{R}^5} \Re \left( \overline{v(x)} u^2(x) \right) \left( \psi(x-y) - \phi(x-y) \right) N_{\kappa} {\rm d}x {\rm d}y \\
& -10 \int_{\mathbb{R}^5} \int_{\mathbb{R}^5} \Re \left( \overline{v(x)} u^2(x) \right) \left( \phi(x-y) - \phi_{1}(x, y) \right) N_{\kappa} {\rm d}x {\rm d}y \\
& +\frac{4}{\omega_{5}R^5} \int_{\mathbb{R}^5} \int_{\mathbb{R}^5} \int_{\mathbb{R}^5} L^{\xi}_{\kappa} \Gamma^{2} \left( \frac{x - s}{R} \right) \Gamma^{2} \left( \frac{y - s}{R} \right) N_{\kappa} {\rm d}x {\rm d}y {\rm d}s \\
= & : \mathcal{A + G + H + I + J}.
\end{aligned}
\end{equation}

Next, we will average this inequality over $ t \in I $ and logarithmically over $ R \in [R_0, R_0e^{J}] $.

We start with $ \frac{\mathrm{d}}{\mathrm{d}t}M(t) $. Looking back at the definition of $ M(t) $, we find the upper bound $ \sup_{t \in \mathbb{R}} \vert M(t) \vert \lesssim RE_0^2 $. By the fundamental theorem of calculus, we have
\begin{equation}
\left\vert \frac{1}{T_0} \int_{I} \frac{1}{J} \int_{R_0}^{R_0e^{J}} \frac{\mathrm{d}}{\mathrm{d}t}M(t) \frac{{\rm d}R}{R} {\rm d}t \right\vert \lesssim \frac{1}{T_0} \frac{R_0e^{J}}{J} E_0^2.
\end{equation}

We turn to $ \mathcal{A} $ integrating by parts.
\begin{equation*}
\begin{aligned}
\mathcal{A} \ge - \int_{\mathbb{R}^5} \int_{\mathbb{R}^5} & \left( 4 \vert u(x) \vert \vert \nabla u(x) \vert + 2\kappa \vert v(x) \vert \vert \nabla v(x) \vert \right) \left\vert 4 \nabla\psi(x-y) + \nabla\phi(x-y) \right\vert \\
& \left( \vert u(y) \vert^2 + \vert v(y) \vert^2 \right) {\rm d}x {\rm d}y.
\end{aligned}
\end{equation*}
The facts $ \vert \nabla \phi \vert \lesssim \frac{1}{R} $ and $ \vert \nabla \psi \vert = \vert \frac{x}{\vert x \vert^2} (\phi - \psi) \vert \lesssim \min \{ \frac{1}{R}, \frac{R}{\vert x \vert^2} \} $ tell us
\begin{equation}
\frac{1}{T_0} \int_{I} \frac{1}{J} \int_{R_0}^{R_0e^{J}} \mathcal{A} \frac{{\rm d}R}{R} {\rm d}t \gtrsim - \frac{1}{JR_0} E_0^2.
\label{A}
\end{equation}

For $ \mathcal{G + J} $, we can establish a lower bound for these terms by Lemma \ref{coerb} after choosing $ \chi_{R}(x) = \Gamma \left( \frac{x-s}{R} \right) $, that is
\begin{equation}
\begin{aligned}
& \frac{1}{T_0} \int_{I} \frac{1}{J} \int_{R_0}^{R_0e^{J}} \mathcal{G + J} \frac{{\rm d}R}{R} {\rm d}t \\
\gtrsim & \frac{\delta}{JT_0} \int_{I} \int_{R_0}^{R_0e^{J}}\frac{1}{R^5} \int_{\mathbb{R}^5} \int_{\mathbb{R}^5} \int_{\mathbb{R}^5} L^{\xi}_{\kappa} \Gamma^{2} \left( \frac{x - s}{R} \right) \Gamma^{2} \left( \frac{y - s}{R} \right) N_{\kappa} {\rm d}x {\rm d}y {\rm d}s \frac{{\rm d}R}{R} {\rm d}t.
\end{aligned}
\label{G+J}
\end{equation}

As for $ \mathcal{H} $, by construction,
\begin{equation*}
\vert \psi(x) - \phi(x) \vert \lesssim \min \left\{ \frac{\vert x \vert}{R}, \frac{R}{\vert x \vert} \right\}.
\end{equation*}
We deduce
\begin{equation}
\frac{1}{T_0} \int_{I} \frac{1}{J} \int_{R_0}^{R_0e^{J}} \mathcal{H} \frac{{\rm d}R}{R} {\rm d}t \gtrsim - \frac{1}{J} E_0^2.
\label{H}
\end{equation}

Finally, similar to the estimates of $ \mathcal{H} $, we have
\begin{equation}
\frac{1}{T_0} \int_{I} \frac{1}{J} \int_{R_0}^{R_0e^{J}} \mathcal{I} \frac{{\rm d}R}{R} {\rm d}t \gtrsim - \eps E_0^2,
\label{I}
\end{equation}
because $ \vert \phi(x-y) - \phi_{1}(x, y) \vert \lesssim \eps $.

Collecting \eqref{A}, \eqref{G+J}, \eqref{H}, and \eqref{I}, we find
\begin{equation}
\begin{aligned}
& \frac{\delta}{JT_0} \int_{I} \int_{R_0}^{R_0e^{J}}\frac{1}{R^5} \int_{\mathbb{R}^5} \int_{\mathbb{R}^5} \int_{\mathbb{R}^5} L^{\xi}_{\kappa} \Gamma^{2} \left( \frac{x - s}{R} \right) \Gamma^{2} \left( \frac{y - s}{R} \right) N_{\kappa} {\rm d}x {\rm d}y {\rm d}s \frac{{\rm d}R}{R} {\rm d}t \\
\lesssim & \left( \frac{R_0e^{J}}{JT_0} + \frac{1}{JR_0} + \frac{1}{J} + \eps \right) E_0^2,
\end{aligned}
\end{equation}
which completes the proof of Proposition \ref{decay}.

\section{Proof of the main result}
In this section, we combine the results in Section 3 and 4 to complete the proof of Theorem \ref{main}.
More specifically, the result of interaction Morawetz estimate is used to verify the condition of scattering criterion.

\textbf{The proof of Theorem \ref{main}:}
First of all, using the rescaling
\[
\b{\rm u}_{\lambda}(t, x) = \lambda^2 \b{\rm u} \left( \lambda^2 t, \lambda x \right),
\]
we can always fix $ \lambda > 0 $ such that \eqref{E0} holds.
In order to establish \eqref{b1}, we change \eqref{Me} into
\[
\frac{\delta}{JT_0} \int_{I} \int_{R_0}^{R_0e^{J}} \frac{1}{R^5} \int_{\mathbb{R}^5} \int_{\mathbb{R}^5} \int_{\mathbb{R}^5} \mathcal{L}^{\xi}_{\kappa} \mathcal{N}_{\kappa} {\rm d}x {\rm d}y {\rm d}s \frac{{\rm d}R}{R} {\rm d}t \lesssim \left( \frac{R_0 e^{J}}{J T_0} + \eps \right) E_0^2,
\]
where
\[
\mathcal{L}^{\xi}_{\kappa} =  \left( 2 \left\vert \nabla \left( \Gamma \left( \frac{x-s}{R} \right) u^{\xi}(x) \right) \right\vert^2 + \kappa \left\vert \nabla \left( \Gamma \left( \frac{x-s}{R} \right) v^{\xi}(x) \right)  \right\vert^2 \right),
\]
and
\[
\mathcal{N}_{\kappa} = \left( 2\kappa \left\vert \Gamma^{2} \left( \frac{y - s}{R} \right) u(y) \right\vert^2 + \left\vert \Gamma^{2} \left( \frac{y - s}{R} \right) v(y) \right\vert^2 \right).
\]
If we choose $ J = \eps^{-1} R_0, T_0 = e^{J} $, then
\[
\frac{\delta}{JT_0} \int_{I} \int_{R_0}^{R_0e^{J}} \frac{1}{R^5} \int_{\mathbb{R}^5} \int_{\mathbb{R}^5} \int_{\mathbb{R}^5} \mathcal{L}^{\xi}_{\kappa} \mathcal{N}_{\kappa} {\rm d}x {\rm d}y {\rm d}s \frac{{\rm d}R}{R} {\rm d}t \lesssim_{E_0} \eps.
\]

Considering the support of $ \Gamma $ in \eqref{Gamma} and using the integral mean value theorem, we can omit writing some constants and get
\[
\frac{1}{T_0} \int_{I} \left\Vert \left( \Gamma u, \Gamma v \right) \right\Vert^2_{{\rm \dot{H}}_{x}^1(\mathbb{R}^5)} \left\Vert \left( \Gamma u, \Gamma v \right) \right\Vert^2_{{\rm L}_{y}^2(\mathbb{R}^5)} {\rm d}t
\lesssim_{E_0} \eps.
\]
The interval $ I $ is divided into many sub-intervals of the same length $ l = \eps^{-\frac{4}{5}} $.
By the pigeonhole principle, there exists $ I_{k} $ such that
\begin{equation*}
\int_{I_{k}} \left\Vert \left( \Gamma u, \Gamma v \right) \right\Vert^2_{{\rm \dot{H}}_{x}^1(\mathbb{R}^5)} \left\Vert \left( \Gamma u, \Gamma v \right) \right\Vert^2_{{\rm L}_{y}^2(\mathbb{R}^5)} {\rm d}t
\lesssim_{E_0} \eps^{\frac{1}{5}},
\end{equation*}
which means
\begin{equation*}
\int_{I_{k}} \left\Vert \left( \Gamma u, \Gamma v \right) \right\Vert^4_{{\rm L}^{\frac{5}{2}}(\mathbb{R}^5)} {\rm d}t \lesssim_{E_0} \eps^{\frac{1}{5}}.
\end{equation*}
At the same time, thanks to \eqref{E0}, we have
\begin{equation*}
\left\Vert \left( \Gamma u, \Gamma v \right) \right\Vert^2_{{\rm L}_{t}^2(I_{k}, {\rm L}_{x}^{\frac{5}{2}}(\mathbb{R}^5))}
= \int_{I_{k}} \left\Vert \left( \Gamma u, \Gamma v \right) \right\Vert^2_{{\rm L}^{\frac{5}{2}}(\mathbb{R}^5)} {\rm d}t \lesssim_{E_0} \eps^{\frac{1}{5}}.
\end{equation*}

By interpolation,
\begin{equation*}
\left\Vert \left( \Gamma u, \Gamma v \right) \right\Vert_{{\rm L}_{t}^6(I_{k}, {\rm L}_{x}^{3}(\mathbb{R}^5))}
\le \left\Vert \left( \Gamma u, \Gamma v \right) \right\Vert^{\frac{1}{3}}_{{\rm L}_{t}^2(I_{k}, {\rm L}_{x}^{\frac{5}{2}}(\mathbb{R}^5))}
\left\Vert \left( \Gamma u, \Gamma v \right) \right\Vert^{\frac{2}{3}}_{{\rm L}_{t}^{\infty}(I_{k}, {\rm H}_{x}^1(\mathbb{R}^5))}
\lesssim_{E_0} \eps^{\frac{1}{30}},
\end{equation*}
which implies \eqref{b1} holds.
So the scattering criterion, Proposition \ref{scat}, tells us that $ \b{\rm u} $ scatters, which completes the proof of Theorem \ref{main}.

\end{document}